\theoremstyle{plain}
\newtheorem{theorem}{Theorem}[section]
\newtheorem{lemma}{Lemma}[section]
\newtheorem{proposition}{Proposition}[section]
\newtheorem{proposition*}{Proposition}
\theoremstyle{definition}
\newtheorem{example}{Example}
\theoremstyle{remark}
\newtheorem{remark}{Remark}[section]
\numberwithin{equation}{section}
\newcommand{\ltext}[2]{%
  \@bsphack
  \csname phantomsection\endcsname 
  \def\@currentlabel{#1}{\label{#2}}%
  \@esphack
}
\DeclareMathOperator{\N}{\mathbb{N}}
\DeclareMathOperator{\Z}{\mathbb{Z}}
\DeclareMathOperator{\R}{\mathbb{R}}
\DeclareMathOperator{\radius}{\mathnormal{r}}
\DeclareMathOperator*{\argmin}{arg\,min}
\DeclareMathOperator{\G}{\mathcal{G}}
\DeclareMathOperator{\Hh}{\mathcal{H}}
\DeclareMathOperator{\Poi}{\mathcal{P}}
\DeclareMathOperator{\tht}{\upvartheta}
\DeclareMathOperator{\Om}{\Upomega}
\DeclareMathOperator{\F}{\mathscr{F}}
\DeclareMathOperator{\A}{\mathscr{A}}
\DeclareMathOperator{\p}{\mathbb{P}}
\DeclareMathOperator{\dist}{\mathrm{D}}
\DeclareMathOperator{\E}{\mathbb{E}}
\title[Limiting shape for FPP models on RGGs]{Limiting shape for first-passage percolation models on random geometric graphs}
\author[C. F. Coletti \and
L. R. de Lima \and
A. Hinsen \and
B. Jahnel \and
D. Valesin]
{Cristian F. Coletti \and
Lucas R. de Lima \and
Alexander Hinsen \and
Benedikt Jahnel \and
Daniel Valesin}
\thanks{{\bf Funding:} This work was funded by the German Research Foundation under Germany's Excellence Strategy MATH+: The Berlin Mathematics Research Center, EXC-2046/1 project ID: 390685689, Orange Labs S.A., and the German Leibniz Association via the Leibniz Competition 2020. Research also supported by grants \#2017/10555-0, \#2019/19056-2, and \#2020/12868-9, S\~ao Paulo Research Foundation (FAPESP)}
\address{Center for Mathematics, Computation, and Cognition, Federal University of ABC\\
Av. dos Estados, 5001\\
09210-580 Santo Andr\'e, S\~ao Paulo\\
Brazil}
\email{cristian.coletti@ufabc.edu.br}
\email{lucas.roberto@ufabc.edu.br}
\address{Weierstrass Institute for Applied Analysis and Stochastics\\
Mohrenstraße 39\\
10117 Berlin\\
Germany}
\email{alexander.hinsen@wias-berlin.de}
\address{Institut f\"ur Mathematische Stochastik, Technische Universit\"at Braunschweig, Universit\"atsplatz 2,
38106 Braunschweig, Germany \& Weierstrass Institute for Applied Analysis and Stochastics\\
Mohrenstraße 39\\
10117 Berlin\\
Germany}
\email{benedikt.jahnel@tu-braunschweig.de}
\address{Department of Statistics, University of Warwick\\
Coventry, CV4 7AL\\
United Kingdom}
\email{daniel.valesin@warwick.ac.uk}
\keywords{Poisson-Gilbert graph; Richardson model; Bernoulli bond percolation; high-density limit; isotropic shapes}
\subjclass[2020]{Primary: 52A22, 60F15; Secondary: 60K35}
\begin{document}


\begin{abstract}
    Let a random geometric graph be defined in the supercritical regime for the existence of a unique infinite connected component in Euclidean space. Consider the first-passage percolation model with independent and identically distributed random variables on the random infinite connected component. We provide sufficient conditions for the existence of the asymptotic shape and we show that the shape is an Euclidean ball. We give some examples exhibiting the result for Bernoulli percolation and the Richardson model. 
    For the Richardson model we further show that it converges weakly to a nonstandard branching process in the joint limit of large intensities and slow passage times.
\end{abstract}

\maketitle


\section{Introduction, main results and examples}

First-passage percolation (FPP) was initially introduced by \citet{hammersley1965} to study the spread of fluids through random medium.  Since then, several variations of the percolation process have been extensively investigated (see \citet{auffinger2017} for an overview of FPP on $\Z^d$) due to their considerable amount of theoretical consequences and applications. It determines a random metric space by assigning random weights to the edges of a graph.

We consider the FPP model defined on a random geometric graph (RGG) in $\R^d$ with $d \geq 2$. Here, the RGG is defined as in \citet{penrose2003} by setting the vertices to be given by a homogeneous Poisson point process (PPP) with intensity $\lambda>0$ and the edges are defined between any pair of vertices that are within an Euclidean distance smaller than a fixed threshold $\radius>0$. This random graph is also known as the Poisson--Gilbert disk model. It is a graph associated to the Poisson--Boolean model in continuum percolation, and it can also be seen as a particular case of the random-connection model (see for instance \citet{meester1996}).

The properties and other details regarding the structure and definition of the process will be given later in the text.  We present here the basic definition in general lines. Let the parameters $(\lambda,r)$ of the RGG be supercritical for the almost-sure existence of a random infinite connected component $\Hh$. Note that the infinite component $\Hh$ is unique almost surely and we define the FPP model on $\Hh$ with independent and identically distributed random variables on the joint probability space $(\Om, \A,\p)$.

The aim of this paper is to investigate the $\p$-\textit{a.s.} existence of the limiting shape of the above defined process. In fact, we show that, under some conditions, the random balls of $\Hh$ converge $\p$-\textit{a.s.} to the deterministic shape of an Euclidean ball. The additional conditions refer to the distribution of zero passage time on the edges and the at-least linear growth of the process.

The model will be formally defined in the next sections, we give below a simplified description of the process to state the main result. Let $\tau$ be a random variable which defines the common distribution of the i.i.d.~passage times $\tau_e$ along each edge $e \in E(\Hh)$.

Set $\radius_c(\lambda)>0$ to be the critical $\radius$ for the existence of the infinite connected component $\Hh$ of the RGG $\G_{\lambda,r}$. Let $B_s(x)$ stand for the open Euclidean ball of radius $s \geq 0$ centered at $x \in \R^d$ and denote by $\upupsilon_d$ be volume of the unit ball in $d$-dimensional Euclidean space. Denote by $H_t$ the random subset of $\R^d$ of points for which their closest point in $\Hh$ is reached by the FPP model up to time $t>0$. We let $H_0$ be the set of points that have the same closest point in $\Hh$ as the origin. Here is our first main theorem.

\begin{theorem}[Shape theorem for FPP on RGGs] \label{thm:shape.FPP}
    Let $d \geq 2$ and $\radius>\radius_c(\lambda)$. Consider the FPP with i.i.d.~random variables
    defined on the infinite connected component $\Hh$ of  $\G_{\lambda,r}$. Suppose that the following conditions are satisfied:
    \begin{enumerate}[(${A}_1$)]
    \item \ltext{${A}_1$}{A1}
    We have that
    \[
        \p(\tau=0) < \frac{1}{\upupsilon_d \radius^d \lambda}.
    \]
    
    \item \ltext{${A}_2$}{A2}
    There exists $\eta>2d+2$ such that
    \[
        \E[\tau^\eta] < +\infty.
    \]
\end{enumerate}
Then, there exists $\upvarphi \in (0, +\infty)$ such that, for all $\varepsilon \in (0,1)$, one has $\p$-\textit{a.s.} that
    \begin{equation} \label{eq:asymptotic.cone}
        (1-\varepsilon) B_{\upvarphi}(o) \subseteq \frac{1}{n}H_{n} \subseteq (1+\varepsilon) B_{\upvarphi}(o)
    \end{equation}
    for sufficiently large $n \in \N$.
\end{theorem}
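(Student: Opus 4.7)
The proof follows the classical Cox--Durrett--Kesten subadditive approach, adapted to the RGG environment. First, extend the passage time to all of $\R^d$ by setting $T(x,y)$ to be the FPP distance on $\Hh$ between the $\Hh$-vertices closest to $x$ and $y$ (two points sharing the same closest $\Hh$-vertex have $T(x,y)=0$). The joint law of the PPP, its induced edge set, and the i.i.d.\ marks $(\tau_e)$ is stationary and ergodic under $\R^d$-translations, and $(T(mx,nx))_{m\le n}$ is subadditive for every $x\in\R^d$. A block-renormalization argument, using the supercriticality $\radius>\radius_c(\lambda)$ and condition~(\ref{A2}), exhibits with positive probability a detour path of controlled Euclidean length whose total weight is integrable; this verifies $\E[T(0,x)] < +\infty$. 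Kingman's subadditive ergodic theorem then produces a deterministic limit $\mu(x)=\lim_n T(0,nx)/n$ a.s.\ and in $L^1$, and subadditivity together with $1$-homogeneity makes $\mu$ a seminorm on $\R^d$.

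The second stage identifies the limiting shape as a Euclidean ball. Positivity of $\mu$ away from the origin comes from~(\ref{A1}): conditionally on the PPP, the zero-passage edges form Bernoulli$(\p(\tau=0))$ bond percolation on $\G_{\lambda,\radius}$, and a breadth-first exploration shows the zero-weight cluster at the origin is stochastically dominated by a Galton--Watson tree with offspring mean $\p(\tau=0)\,\upupsilon_d\,\radius^d\,\lambda < 1$, hence a.s.\ finite with exponential tails. This forbids the FPP from reaching Euclidean distance $n$ in sublinear time, so $\mu > 0$ on $\R^d\setminus\{0\}$. Isotropy is then immediate: the distributions of the PPP and of the Euclidean connection function are invariant under rotations, so $\mu(Rx)=\mu(x)$ for every orthogonal $R$; combined with $1$-homogeneity and continuity this forces $\mu(\cdot)=\upvarphi^{-1}\|\cdot\|$ for some $\upvarphi\in(0,+\infty)$, which identifies the limit shape.

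The final step upgrades directional convergence to the two-sided inclusion~(\ref{eq:asymptotic.cone}). Fix $\varepsilon\in(0,1)$, cover $\upvarphi\, S^{d-1}$ by a net $\{x_1,\ldots,x_N\}$ with $N=O((1/\varepsilon)^{d-1})$, and derive polynomial deviation bounds of the form
\[
    \p\bigl(|T(0,nx_i)-n\mu(x_i)|\geq \varepsilon n\bigr) \leq C\,n^{-\eta},
\]
uniformly in $i$; then union-bound over the net, over the scales $n\in\N$, and over an auxiliary discretization used to control the geometric fluctuation of the projection $x\mapsto \pi(x)$ near the sphere $n\upvarphi\, S^{d-1}$. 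The exponent $\eta>2d+2$ is exactly what renders all of these sums finite, so that Borel--Cantelli delivers both inclusions in~(\ref{eq:asymptotic.cone}) simultaneously for all large $n$.

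I expect the principal obstacle to lie in this last step: producing polynomial deviation estimates for $T(0,nx)$ that remain summable after the combined union bound over a sphere-net, the discrete scales, and the PPP-induced geometric fluctuations, while only assuming finitely many moments of $\tau$. The hypothesis $\eta>2d+2$, rather than a generic $\E[\tau]<+\infty$, is forced precisely by the accounting here, and the additional randomness of the underlying RGG (absent in the $\Z^d$ setting) requires that the path-construction arguments used to bound the tails of $T$ be paired with concentration estimates for Lipschitz functionals of the Poisson input.
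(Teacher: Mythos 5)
Your high-level skeleton matches the paper's: Kingman for directional convergence, rotation invariance to identify the shape as a Euclidean ball, then a finite net plus Borel--Cantelli to upgrade directional convergence to the two-sided inclusion. The role you assign to $(A_2)$ (enough moments to get summable deviations after a $t^d$ union bound) is also the right one.

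There is, however, a genuine gap in your positivity step. Dominating the \emph{zero}-weight cluster by a subcritical Galton--Watson tree with offspring mean $\p(\tau=0)\,\upupsilon_d r^d\lambda<1$ does not by itself give $\mu>0$: even if that cluster is a.s.\ finite, edges with arbitrarily small but positive weight could be overwhelmingly likely, and the FPP could still reach distance $n$ in $o(n)$ time. The necessary ingredient you omit is right-continuity of the CDF of $\tau$ at $0$: one picks $\delta>0$ and $\kappa>1$ with $\p(\tau\le\delta)<1/(\kappa\,\upupsilon_d r^d\lambda)$, and then controls edges with weight $\le\delta$, not just those with weight $0$. Moreover, ``the cluster is finite, so travel time is linear'' is itself a nontrivial step. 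The paper's Lemma~\ref{lm:FPP.mean.lower.bound} sidesteps the cluster/renormalization route altogether: it fixes a path of length $m$, applies a Chernoff bound to show $T(\gamma)\ge cm$ fails with probability at most $(\kappa'\upupsilon_d r^d)^{-m}$, then union-bounds against the a.s.\ path count $|W^{q(o)}_m|\le(\kappa''\upupsilon_d r^d)^m$ from Lemma~\ref{lm:Wn.upperbound}, with $1<\kappa''<\kappa'$. Your GW argument can be repaired by the same two additions, but as written it does not close.

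Your final step also diverges from (and is harder than) the paper's. You propose two-sided concentration $\p(|T(0,nx_i)-n\mu(x_i)|\ge\varepsilon n)\le Cn^{-\eta}$, which is not what the paper proves, and establishing such concentration directly from $(A_2)$ alone would require substantial extra work (concentration for FPP is famously delicate even on $\Z^d$). The paper instead uses the one-sided supremum tail bound of Lemma~\ref{lm:FPP.seq.conv}, $\p(\sup_{y\in B_{\delta t}(x)}T(x,y)\ge t)\le Ct^{-\upkappa}$ with $\upkappa=\eta/2-d>1$, obtained from a quenched Rosenthal-type moment bound along a chemical geodesic plus the exponential tail for the chemical distance. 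Combined with Kingman (for the finitely many net directions) and subadditivity, this one-sided bound suffices for both inclusions --- so the ``principal obstacle'' you identify is avoidable, not inherent.
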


The existence of the limiting shape is particularly interesting because the RGG is a random graph which exhibits unbounded holes and unbounded degrees. To avoid the possible extreme effects of such pathologies on the growth of the process, we control the growth almost surely by combining the conditions above with properties of the point process.

The interest of applications for this class of models has already been pointed out by \citet{jahnel2020}. In particular, they suggested the theorem for the Richardson model on telecommunication networks. The example is naturally associated with the contact process by stochastic domination as studied by \citet{menard2016}, and \citet{riblet2019}. Another interesting application is a lower bound for the critical probability of bond percolation on the RGG. The same lower bound can be obtained by other methods (\textit{e.g.} branching processes), however it shows in comparison how good and suitable condition \eqref{A1} is.

It is worth pointing out that one can find in the literature a bigger class of random geometric graphs studied by \citet{hirsch2015} where the graph distance was interpreted as a FPP model. It suggests that the class of RGGs could also be expanded in our case. We chose to focus our attention on the standard definition in this work due to the usage of intermediate results presented in the next section.

Before we state our second main result, let us present some examples.

\begin{example}[Bond percolation]
    We define the bond percolation by considering the clusters of the Bernoulli FPP only at time zero, see Figure~\ref{fig:bond_perc_RGG} for an illustration. For this, let us call $e \in E(\Hh)$ an \emph{open edge} when $\tau_e =0$. Set $\tau_e \sim \operatorname{Ber}(1-p)$ independently for every $e \in E(\Hh)$ and observe that \eqref{A2} is immediately satisfied.
    
    Then, the open clusters are maximally connected components defined by sites with passage time zero between them. Let us define the critical probability $p_c$ for the bond percolation on the $d$-dimensional RGG by
    \[
        p_c := \inf\{p \in [0,1]: \p(\exists \text{ an infinite open cluster in } \Hh)>0,\tau_e \sim \operatorname{Ber}(1-p)\}.
    \]
    \begin{figure}[!ht]
        \centering
        \fbox{\includegraphics[width=325pt]{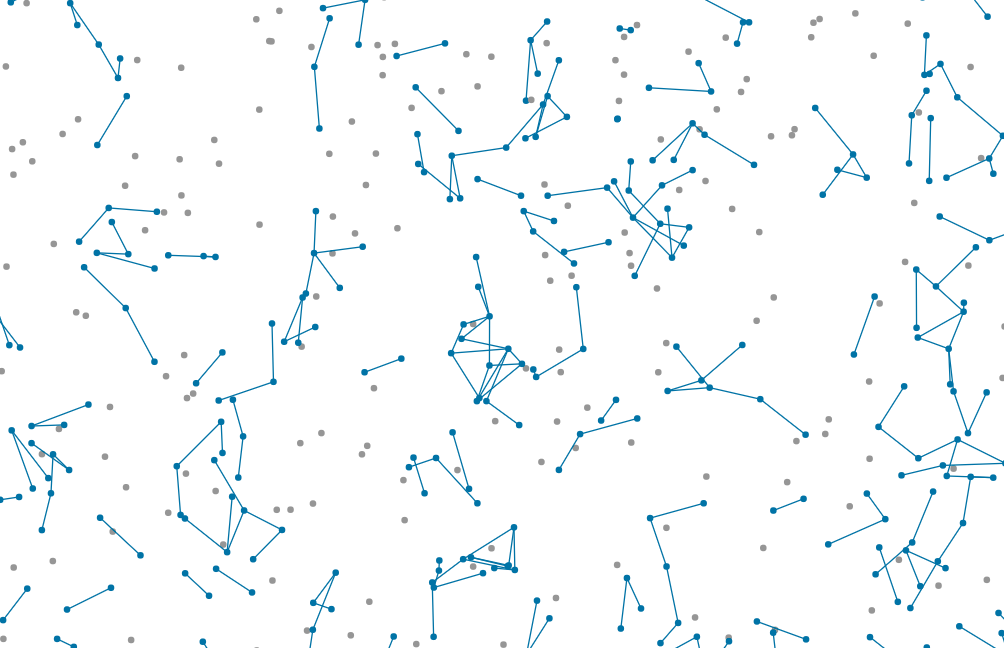}}
        \caption{Simulation of the open clusters for a bond percolation model on a 2-dimensional RGG with $p<1/(\upupsilon_d\radius^d\lambda)$.}
        \label{fig:bond_perc_RGG}
    \end{figure}
    Note that by Theorem \ref{thm:shape.FPP} the case $p< 1/(\upupsilon_d \radius^d \lambda)$ implies the existence of the limiting shape. Thus, an immediate consequence of the theorem is the following lower bound for the critical probability
    \[
        p_c \geq 1/(\upupsilon_d \radius^d\lambda),
    \]
    and for $p=0$ we recover $\Hh$. 
    We observe that the same lower bound can also be obtained by exploration methods.
\end{example}

\begin{example}[Richardson's growth model] 
    Consider the interacting particle system known as the Richardson model defined on the infinite connected component $\Hh$ of the RGG with parameter $\lambda_{\rm I}>0$. It is a random growth process based on a model introduced by \citet{richardson1973} and illustrated in Figure~\ref{fig:Richardson_RGG}. It is commonly referred to as a model for the spread of an infection or for the growth of a population.
    
    At each time $t \geq 0$, a site of $\Hh$ is in either of two states, healthy (vacant) or infected (occupied). Let $\zeta_t:V(\Hh)\to \{0,1\}$ indicate the state of the sites at time $t$ assigning the values $0$ and $1$ for the healthy and infected states, respectively. The process evolves as follows:
    \begin{itemize}
        \item A healthy particle becomes infected at rate $\lambda_{\rm I}\sum_{y \sim x}\zeta_t(y)$ and
        \item an infected particle remains infected forever.
    \end{itemize}

    It is easily seen that the process is determined by FPP with edge passage times $\tau_e \sim \mathrm{Exp}(\lambda_{\rm I})$ independently for each $e \in E(\Hh)$. In particular, this version of the Richardson model conventionally stochastically dominates the basic contact process.
    \begin{figure}[!ht]
        \centering
        \fbox{\includegraphics[scale=0.27]{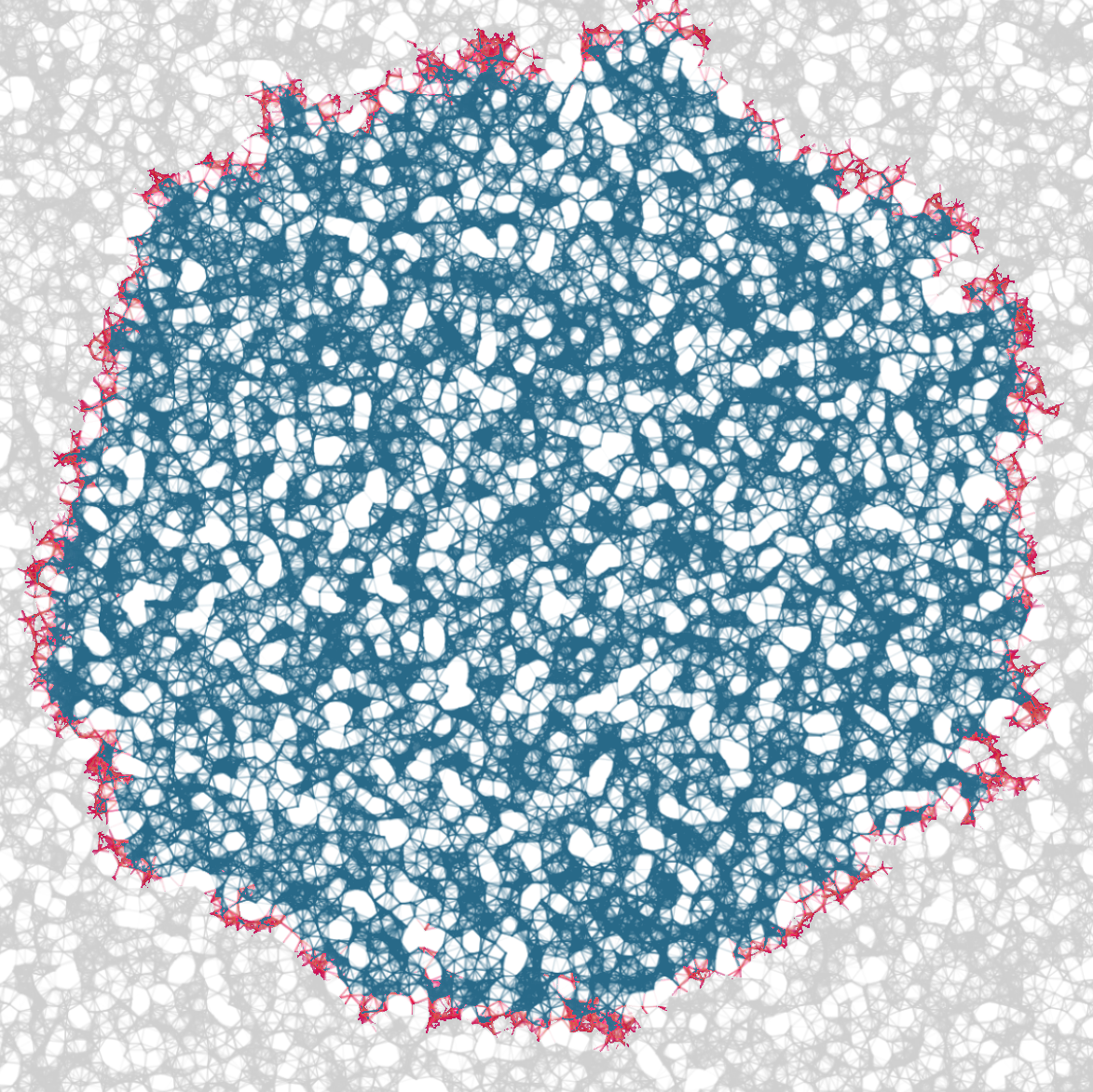}}
        \caption{Simulation of the spread of an infection given by the Richardson model on a bidimensional RGG.}
        \label{fig:Richardson_RGG}
    \end{figure}

    Conditions \eqref{A1} and \eqref{A2} are straightforward since $\p(\tau=0)=0<1/(\upupsilon_d \radius^d\lambda)$ and since  $\E[\exp(\alpha\tau)] < +\infty$ for $\alpha \in (0, \lambda_{\rm I})$. Hence, Theorem~\ref{thm:shape.FPP} is valid for the Richardson model on $\Hh$ for any supercritical $\radius> \radius_c(\lambda)$.
    
    Futhermore, it is immediate to see that Theorem \ref{thm:shape.FPP} still holds for any initial configuration $\mathrm{Z} \subseteq \R^d$ of infected particles whenever $\mathrm{Z} \subseteq B_{s'}(o)$ for some $s'>0$. In that case, we simply replace $H_t$ by $H_t^\mathrm{Z} := \bigcup_{z \in \mathrm{Z}}H_t^z$.
\end{example}

Our second main result concerns the asymptotic behavior of the Richardson model in the limit as $\alpha$ diverges to infinity in $\alpha\lambda$ and $\lambda_{\rm I}/\alpha$. In words, we consider a coupled limit of high densities and slow infection rates.

Indicating the parameters now explicitly, we write $\mathcal{H}_t^{\lambda,\lambda_\mathsf{I}}$ for the set of points in $\mathcal{H}$ reached by the FPP model up to time $t > 0$. As before, $\Hh_{0}^{\lambda, \lambda_{\rm I}}$ is the closest point in $\Hh$ to the origin, with $\Hh_0^{\lambda, \lambda_{\rm I}} = \emptyset$ if there is no infinite component. 

The limiting process is a branching process $(\mathcal{T}^{\lambda,\lambda_{\rm I}}_t)_{t\ge 0}$ defined as follows. At time zero, the process has a node only at the origin, i.e., $\mathcal{T}^{\lambda,\lambda_{\rm I}}_0=o$. Then, iteratively, each node $X_i\in \R^d$ of the process produces offsprings independently according to a Poisson process in time with intensity $\upupsilon_d r^d\lambda\lambda_{\rm I}$ and the offsprings are placed independently and uniformly within $B_r(X_i)$.   Let us note that this process has similarities with the growth process as presented in~\citet{deijfen2003asymptotic}.  Our second main result is given below.

\begin{theorem}[Time-space rescaling for Richardson models] \label{thm:rescaling.FPP}
Let $d\ge 2$. For the Richardson model with parameters $\radius, \lambda, \lambda_{\rm I}$ where $\radius>\radius_{\rm c}(\lambda)$, we have that
\begin{align*}
(\Hh_{t}^{\alpha \lambda, \lambda_{\rm I}/\alpha})_{t \ge 0} \longrightarrow (\mathcal{T}^{\lambda, \lambda_{\rm I}}_{t})_{t \ge 0},
\end{align*}
weakly with respect to the Skorokhod topology based on the weak topology, as $\alpha$ tends to infinity.
\end{theorem}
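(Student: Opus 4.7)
The plan is to view both processes as measure-valued càdlàg Markov processes on $\mathcal{M}(\R^d)$, to identify the limit generator via Slivnyak--Mecke, and then convert the generator match into weak convergence in $D([0,\infty), \mathcal{M}(\R^d))$ via a jump-by-jump coupling whose error vanishes as $\alpha \to \infty$. I encode the Richardson model by the counting measure $\mu_t^\alpha := \sum_{x \in \Hh_t^{\alpha\lambda, \lambda_{\rm I}/\alpha}} \delta_x$ on the infected $\Hh$-vertices, and the branching process by $\nu_t := \sum_{X \in \mathcal{T}^{\lambda,\lambda_{\rm I}}_t} \delta_X$. Already at time $0$, the configurations agree in the limit, since the $\Hh$-vertex nearest the origin in $\G_{\alpha\lambda, r}$ is at Euclidean distance $O_{\p}(\alpha^{-1/d})$ from $o$ (using that the density of the infinite cluster tends to that of the underlying Poisson point process), so $\mu_0^\alpha \Rightarrow \delta_o = \nu_0$.

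For the generator identification, condition on an infected configuration $\{X_1, \dots, X_k\}$. By Slivnyak--Mecke, the healthy PPP points in $\bigcup_i B_r(X_i) \setminus \{X_1, \dots, X_k\}$ form a Poisson process of intensity $\alpha\lambda$, and each $y \in B_r(X_i)$ is infected by $X_i$ at rate $\lambda_{\rm I}/\alpha$. Taking the conditional expectation over the PPP, the rate at which a new infection appears in a Borel set $A \subset \R^d$ equals
\[
\lambda\lambda_{\rm I}\sum_i \mathrm{Leb}(A \cap B_r(X_i)) + O(1/\alpha),
\]
which is exactly the offspring intensity of $\nu$ at this configuration; conditional on a jump at parent $X_i$, the placement is uniform on $B_r(X_i)$ up to the same $O(1/\alpha)$ correction coming from the already infected vertices.

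To pass from the generator match to path convergence, I couple $(\mu^\alpha_t)_{t \le T}$ and $(\nu_t)_{t \le T}$ on a finite horizon $[0,T]$ by matching jumps in order of occurrence: whenever $\nu$ produces an offspring at $Y' \in B_r(X_i)$, the paired $\mu^\alpha$ infects the healthy PPP point in $B_r(X_i)$ closest to $Y'$. Three error sources must be controlled: \emph{double targeting} (two coupled offspring mapped to the same PPP point in an overlap $B_r(X_i) \cap B_r(X_j)$) has expected count $O(1/\alpha)$ on $[0,T]$ by a second-moment computation; \emph{rate fluctuations} between the Richardson per-parent rate $\lambda_{\rm I}/\alpha \cdot \#\{\text{healthy neighbors of } X_i\}$ and its Poisson mean $\upupsilon_d r^d \lambda\lambda_{\rm I}$ are of relative order $\alpha^{-1/2}$ by Poisson concentration; and \emph{spatial mismatch} between $Y'$ and its paired PPP vertex is $O(\alpha^{-1/d})$. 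All three vanish uniformly on $[0,T]$ provided $|\mu^\alpha_t|$ stays bounded in probability, which follows by stochastic domination of the total population by a Yule process with per-particle rate $\upupsilon_d r^d \lambda\lambda_{\rm I}$ (a domination that also yields non-explosion of $\mathcal{T}^{\lambda,\lambda_{\rm I}}$).

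The main obstacle will be tightness in the Skorokhod $J_1$ topology on paths into $\mathcal{M}(\R^d)$ equipped with the weak topology, together with the uniform bookkeeping of the three coupling errors across the random, unboundedly many jumps on $[0,T]$. For tightness I would verify an Aldous-type criterion for pure-jump measure-valued processes using the Yule domination and the uniform bound on the per-parent jump rate; finite-dimensional convergence then follows from the coupling together with a union bound over the bad events. Combining these yields weak convergence of $(\Hh_t^{\alpha\lambda,\lambda_{\rm I}/\alpha})_{t \ge 0}$ to $(\mathcal{T}_t^{\lambda,\lambda_{\rm I}})_{t \ge 0}$ in the stated Skorokhod sense.
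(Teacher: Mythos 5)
Your approach is genuinely different from the paper's, and it is worth comparing the two carefully, because the paper uses a structural observation that lets it sidestep exactly the parts you flag as ``the main obstacle.''

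The paper observes that for processes in~$D_0$ --- constant-by-parts c\`adl\`ag paths with isolated jumps into the space~$\mathcal{M}$ of finite point measures --- the map~$\Lambda$ sending a path to the sequence~$(z_0,z_1,s_1-s_0,z_2,s_2-s_1,\ldots)$ of jump locations and inter-jump times is a homeomorphism onto its image in the product space~$\mathbb{R}^d\times(\mathbb{R}^d\times(0,\infty))^{\mathbb{N}}$. Since weak convergence in a countable product is equivalent to convergence of all finite-dimensional marginals, the Skorokhod-space convergence you want is \emph{exactly} equivalent to convergence of~$(Z_{\alpha,0},Z_{\alpha,1},T_{\alpha,1},\ldots,Z_{\alpha,k},T_{\alpha,k})$ for each~$k$. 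There is therefore no separate tightness step; the entire proof becomes an induction on the number of jumps, driven by a uniform proximity lemma between the one-step transition kernels~$K_\alpha(S,\cdot)$ and~$K(S,\cdot)$ (Lemmas~\ref{lem_prox_kernel}--\ref{lem_prox_kernel_LL}). Your plan --- generator identification via Slivnyak--Mecke plus a jump-by-jump coupling plus an Aldous-type tightness verification --- is the standard Markov-process route and is in principle viable, but it reproves from scratch something the~$\Lambda$ homeomorphism gives for free, and the tightness argument is the part you explicitly leave as a plan rather than carrying out.

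Beyond the tightness gap, there is a concrete defect in the coupling you describe. ``Whenever~$\nu$ produces an offspring at~$Y'$, let~$\mu^\alpha$ infect the healthy PPP point in~$B_r(X_i)$ closest to~$Y'$'' does \emph{not} reproduce the Richardson law for the next infected site. Given the current infected set~$S$, the Richardson model selects the next vertex according to~$\mathcal{K}_\alpha(S,\cdot)$, i.e.\ with probability proportional to the number of infected neighbours. Your rule instead produces a point with probability proportional to the Lebesgue measure of its Voronoi cell within~$\bigcup_i B_r(X_i)$, weighted by the overlap multiplicity of~$Y'$'s sampling measure~$\mathcal{K}(S,\cdot)$. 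These two distributions coincide only in expectation over the PPP and only up to fluctuations that you would still need to quantify and control uniformly over~$k$ jumps. So the coupling as stated has an uncontrolled distributional mismatch that is separate from, and in addition to, the~$O(\alpha^{-1/d})$ spatial mismatch you account for. The cleaner statement you actually need is precisely the paper's Lemma~\ref{lem_prox_kernel}: with high probability over~$\mathcal{P}_{\alpha\lambda}$, uniformly over all configurations~$S\subseteq B_{kr}(o)\cap\mathcal{P}_{\alpha\lambda}$ with~$|S|\le k$, the kernel~$\mathcal{K}_\alpha(S,\cdot)$ is close in the appropriate sense to~$\mathcal{K}(S,\cdot)$, and similarly for the temporal kernels. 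Once you have that quenched uniform kernel proximity, you might as well exploit it directly through the~$\Lambda$ reduction and an induction on~$k$, rather than routing it through a coupling whose correctness itself depends on the same estimate.

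In short: the generator identification and the Yule-domination boundedness argument in your sketch are sound ideas, and the route could be made to work, but (i) the tightness step is nontrivial and is left unexecuted, (ii) the explicit coupling rule does not give a law-exact Richardson process, and (iii) both issues evaporate if one first reduces Skorokhod convergence to finite-dimensional convergence via the~$\Lambda$ homeomorphism, which is the key structural observation the paper's proof rests on and which your plan does not use.
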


More details regarding the topologies involved in the above convergence will be given in Section~\ref{sec:Scale}.

The rest of the manuscript is organized as follows. In Section~\ref{sec:RGG}, we have compiled some basic facts about the RGG and show results on the asymptotic behaviour of the infinite component $\Hh$. The FPP model is defined in detail in Section~\ref{sec:FPP} where we also present the proof of Theorem~\ref{thm:shape.FPP}. Finally, in Section~\ref{sec:Scale} we present the proof of Theorem~\ref{thm:rescaling.FPP}.

\section{On the Random Geometric Graph} \label{sec:RGG}

In this section we present the definition and parameters for the RGG and the existence of the infinite connected component. We also show some results about its geometry in order to study the asymptotic shape in the next section.

Let $\Poi_\lambda$ be the random set of points determined by the homogeneous PPP on $\R^d$ with intensity $\lambda>0$. The RGG $\G_{\lambda,r}=(V,E)$ on $\R^d$ is defined by
\[
  V = \Poi_\lambda \quad \text{and} \quad
  E = \big\{\{u,v\} \subseteq V: \|u-v\|<\radius,~ u \neq v\big\},
\]
where $\|\cdot\|$ is the Euclidean norm. Since $\lambda^{-1/d}\Poi_\lambda \sim \Poi_1$, we may regard $\lambda$ as fixed due to the homogeneity of the norm. We write $\G_{\radius} := \G_{1,\radius}$ and $\Poi=\Poi_1$. Set $\left(\Upxi,\F,\mu\right)$ to be the probability space induced by the construction of $\Poi$. Let us now introduce the group action $\tht:\R^d \curvearrowright\Upxi$ which is determined by the spatial translation as a shift operator. That is, $\Poi\circ\tht_z = \big\{v-z :v \in \Poi \big\}$. The following lemma is a classical result on PPPs, which can be found for example in \citet[Proposition 2.6]{meester1996}.

\begin{lemma} \label{lm:PPP.mixing}
    The homogeneous PPP is mixing on $(\Upxi,\F,\mu,\tht)$.
\end{lemma}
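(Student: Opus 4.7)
My plan is to verify mixing via the independence property of the Poisson point process on disjoint bounded sets, reducing to a generating family by a standard monotone class argument. Recall that mixing of $(\Upxi, \F, \mu, \tht)$ requires
$$\lim_{\|z\|\to\infty} \mu(A \cap \tht_z^{-1} B) = \mu(A)\,\mu(B)$$
for every pair $A, B \in \F$. Since $\F$ is generated by the $\pi$-system $\mathcal{C}$ of cylinder events of the form $\{\#(\Poi \cap A_1) \in C_1, \ldots, \#(\Poi \cap A_k) \in C_k\}$, with $A_1, \ldots, A_k \subseteq \R^d$ bounded Borel sets and $C_1, \ldots, C_k \subseteq \N$, it suffices to establish the mixing condition for $A, B \in \mathcal{C}$ and then extend.

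For such $A$ and $B$, the event $A$ depends only on the restriction of the configuration to some bounded Borel set $K_A$, and $B$ similarly on $K_B$. The translated event $\tht_z^{-1} B$ then depends only on the restriction to $K_B + z$. Choosing $R > 0$ so that $K_A \cup K_B \subseteq B_R(o)$, we have $K_A \cap (K_B + z) = \emptyset$ as soon as $\|z\| > 2R$. Independence of the homogeneous PPP on disjoint bounded Borel sets then yields
$$\mu(A \cap \tht_z^{-1} B) = \mu(A)\,\mu(\tht_z^{-1} B),$$
and translation invariance of $\mu$ gives $\mu(\tht_z^{-1} B) = \mu(B)$. Thus, the mixing equality holds exactly (not merely asymptotically) once $\|z\|$ is sufficiently large.

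The remaining step is to extend the convergence from $\mathcal{C}$ to all of $\F$, which is routine. Fixing $B \in \mathcal{C}$, the family
$$\mathcal{L}_B := \bigl\{A \in \F : \mu(A \cap \tht_z^{-1} B) \to \mu(A)\,\mu(B) \text{ as } \|z\| \to \infty\bigr\}$$
is a $\lambda$-system: stability under complements uses $\mu(A^\comp \cap \tht_z^{-1}B) = \mu(\tht_z^{-1}B) - \mu(A \cap \tht_z^{-1}B)$, and stability under countable disjoint unions follows from dominated convergence applied to the bound $\mu(A_n \cap \tht_z^{-1} B) \le \mu(A_n)$ together with $\sum_n \mu(A_n) \le 1$. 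Since $\mathcal{L}_B \supseteq \mathcal{C}$, Dynkin's $\pi$-$\lambda$ theorem gives $\mathcal{L}_B = \F$. Repeating the argument symmetrically in $B$ completes the proof. No genuine obstacle is anticipated; the argument relies only on the two defining properties of the homogeneous PPP, namely independence on disjoint sets and translation invariance, exactly as in the cited reference \citet[Proposition 2.6]{meester1996}.
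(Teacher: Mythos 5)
The paper does not actually provide a proof of this lemma; it simply cites \citet[Proposition 2.6]{meester1996}, and that reference in turn uses exactly the argument you give (independence of the PPP on disjoint bounded sets, plus a monotone-class extension from cylinder events). Your proof is correct and matches the standard argument the paper defers to; the only cosmetic point worth flagging is that the constraint sets~$C_i$ should live in~$\mathbb{N}_0 = \{0,1,2,\dots\}$ rather than~$\mathbb{N}$, and one must check that~$\Upxi$ itself belongs to the $\pi$-system~$\mathcal{C}$ (it does, via $k=0$ or $C_1 = \mathbb{N}_0$), which is needed to start the $\pi$-$\lambda$ argument.
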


\begin{remark} \label{rmk:isometry}
    Let $S:\R^d\to\R^d$ be an isometry. Then, it is known that $S$ induces a $\mu$-preserving ergodic function $\sigma: \Upxi \to \Upxi$ where $S[\Poi] = \Poi \circ \sigma$.
\end{remark}

We are interested in studying the spread of an infection on an infinite connected component of $\G_{\radius}$. It is a well-known fact from continuum percolation theory (see \citet{meester1996} or \citet[Chapter~10]{penrose2003} for details) that, for all $d \geq 2$, there exists a critical $\radius_c>0$ such that $\G_{\radius}$ has an infinite component $\Hh$ $\mu$-\textit{a.s.} for all $\radius>\radius_c$. Moreover, $\Hh$ is $\mu$-\textit{a.s.} unique. Since $\Hh$ is a subgraph of $\G_{\radius}$, we denote by $V(\Hh)$ and $E(\Hh)$ its sets of vertices and edges, respectively.

From now on, write $(\Upxi',\F',\mu)$ for the probability space of the PPP conditioned on the existence of $\Hh$ when $\radius>\radius_c$. { It suffices for our purposes to know that $\radius_c\geq 1/{\upupsilon_d}^{1/d}$ where $\upupsilon_d$ denotes the volume of the unit ball in the $d$-dimensional Euclidean space. Indeed, improved lower and upper bounds can be found in \citet{torquato2012} and $\radius_c$ approximates to $1/{\upupsilon_d}^{1/d}$ from above as $d \to +\infty$.}

Let us write $\theta_{\radius} := \mu \big( B_{\radius}(o) \cap V(\Hh) \neq \emptyset\big)$ and denote the cardinality of a set by $|\cdot|$. 
\begin{proposition}\label{prop:Hn.growth}\hspace{-0.1cm}{\rm{(Weaker version of~\citet[Theorem 1]{penrose1996}).}}
    Let $d \geq 2$, $\radius>\radius_c$ and $\varepsilon \in (0,1/2)$. Then, there exists $c>0$ and $s_0>0$ such that, for all $s \geq s_0$,
    \[
        \mu\left((1-\varepsilon)\theta_{\radius} <\frac{|V(\Hh) \cap [-s/2,s/2]^d |}{s^d}  <(1+\varepsilon)\theta_{\radius} \right) \geq 1 -\exp(-cs^{d-1}).
    \]
\end{proposition}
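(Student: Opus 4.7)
My plan is a renormalization argument in the spirit of Penrose (1996): partition the box, replace each local cell count by a surrogate measurable with respect to a bounded neighborhood, and combine concentration for nearly-independent sums with a Peierls-type surface bound. First, I fix a renormalization scale $K = K(\varepsilon, \radius)$ to be chosen large, and partition $[-s/2,s/2]^d$ into cubes $C_1,\dots,C_M$ of side $K$. Writing $N_i := |V(\Hh) \cap C_i|$, I have $|V(\Hh) \cap [-s/2,s/2]^d| = \sum_i N_i$. The essential difficulty is that each $N_i$ depends on the global infinite cluster $\Hh$, so I introduce a local surrogate $\tilde N_i$ equal to the count of points of $\Poi \cap C_i$ belonging to the largest connected component of $\G_\radius$ restricted to an enlarged cube $C_i^+$ of side $3K$. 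The family $\{\tilde N_i\}$ then has finite range of dependence.

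The central analytic input is the bound $\E|N_i - \tilde N_i| \leq \delta(K)\, K^d$ with $\delta(K) \downarrow 0$ as $K \to \infty$, which follows from the stretched-exponential decay of the diameters of finite clusters in the supercritical Gilbert graph, together with $\mu$-\textit{a.s.}~uniqueness of the infinite component. A Palm/ergodic computation then identifies $\E[\tilde N_i]/K^d$ with the limiting density of $V(\Hh)$, which the paper denotes $\theta_\radius$. Next I colour the cubes into $3^d$ classes so that same-colour enlargements $C_i^+$ are disjoint, obtaining sums of i.i.d.~bounded variables within each class; Bernstein's inequality, applied per class, yields
\[
\mu\!\left(\Big|\textstyle\sum_i \tilde N_i - \E\sum_i \tilde N_i\Big| > \tfrac{\varepsilon}{2}\theta_\radius\, s^d\right) \leq \exp\bigl(-c\, s^d / K^d\bigr).
\]

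The stated rate $\exp(-c s^{d-1})$ is weaker than the volume-order bound produced by the concentration step; it is delivered by the Peierls component of the renormalization, which shows that a macroscopic deviation forces a contour of \emph{bad} $K$-cubes of surface size $\gtrsim s^{d-1}/K^{d-1}$, each cube being independently bad with probability arbitrarily small in $K$. Boundary cubes touching $\partial[-s/2,s/2]^d$ contribute at most $O(s^{d-1})$ vertices, absorbed into the $\varepsilon$-slack after division by $s^d$. The main technical obstacle is precisely the local-to-global reduction: quantifying $\E|N_i - \tilde N_i|$ requires the stretched-exponential cluster-diameter estimates in the supercritical Gilbert graph, which is exactly the point where Penrose's theorem is imported; once that input is in hand, the remaining steps are standard concentration and Peierls-type counting.
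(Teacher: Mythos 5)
The paper does not supply a proof of Proposition~\ref{prop:Hn.growth}: it imports the statement as a weakened form of~\citet[Theorem~1]{penrose1996} (the Penrose--Pisztora surface-order large-deviation theorem for continuum percolation), so there is no ``paper's own proof'' to check your sketch against. That said, your outline reconstructs essentially the right strategy from that source: coarse-graining at a fixed mesoscopic scale $K$, local surrogates with finite range of dependence, a volume-order concentration step, and a Peierls contour estimate.

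Two places in the sketch conceal rather than resolve the essential difficulty. First, the surrogate ``$\tilde N_i :=$ count in the largest component of $\G_\radius$ restricted to $C_i^+$'' is not the object that makes the contour argument go through. The Pisztora-style renormalization keys on the \emph{crossing} cluster of $C_i^+$ together with a uniqueness and density condition, precisely so that adjacent good blocks are guaranteed to share a cluster and thereby chain into $\Hh$; ``largest component'' does not supply this linking structure. Second, and more seriously, the bound $\E|N_i - \tilde N_i| \le \delta(K)K^d$ is only an in-mean estimate, which via Markov yields a constant (not exponentially small) probability bound for $\sum_i(\tilde N_i - N_i)$. The whole content of the surface-order rate is that a macroscopic discrepancy between the local crossing clusters and $\Hh$ forces a cutting surface of bad renormalized blocks; this is not an afterthought that ``delivers'' the weaker exponent, it \emph{is} the theorem, and your sketch attributes its resolution to ``Penrose's theorem'' --- which, if that means Theorem~1 of \citet{penrose1996}, is the very statement being proved. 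The correct external inputs are distinct (finite-cluster diameter decay in the supercritical regime and a one-scale uniqueness-of-crossing estimate), and the reduction from those to the contour bound is the nontrivial remaining work that the outline elides.
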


As a consequence of the last result, we present the following lemma without proof (see \citet[Lemma~3.3]{yao2011}).

\begin{lemma} \label{lm:ball.intersecting.infinite.comp}
    Let $\radius>\radius_c$. Then, there exists $C,C'>0$ such that, for each $x \in \R^d$ and all $s>0$,
    \[\mu\big(B_s(x) \cap V(\Hh)= \emptyset\big) \leq C\exp(-C's^{d-1}).\]
\end{lemma}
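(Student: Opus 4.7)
The plan is to derive this tail bound directly from Proposition~\ref{prop:Hn.growth}, which already contains the essential Peierls-type concentration. Observe that $\theta_{\radius}>0$ because $\radius>\radius_c$, so Proposition~\ref{prop:Hn.growth} with any fixed $\varepsilon\in(0,1/2)$ gives a genuinely positive lower bound on $|V(\Hh)\cap[-s/2,s/2]^d|$ with overwhelming probability; in particular the event of emptiness in such a cube falls inside the complement of the high-probability event and therefore inherits the exponential bound $\exp(-c s^{d-1})$.

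First, by the stationarity of $\mu$ under the translation action $\tht$ (Lemma~\ref{lm:PPP.mixing}, Remark~\ref{rmk:isometry}) and the fact that $\Hh$ is defined intrinsically from $\Poi$, the law of $B_s(x)\cap V(\Hh)$ does not depend on $x$. It therefore suffices to bound $\mu(B_s(o)\cap V(\Hh)=\emptyset)$.

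Second, I inscribe an axis-aligned cube inside the ball: set $\ell:=2s/\sqrt{d}$ and $Q_s:=[-\ell/2,\ell/2]^d$, so that $Q_s\subseteq B_s(o)$. Hence
\[
\{B_s(o)\cap V(\Hh)=\emptyset\}\subseteq\{Q_s\cap V(\Hh)=\emptyset\}.
\]
Applying Proposition~\ref{prop:Hn.growth} with $\varepsilon=1/2$ to the cube of side $\ell$, for $\ell\geq s_0$ one has, with probability at least $1-\exp(-c\ell^{d-1})$, that $|V(\Hh)\cap Q_s|\geq\tfrac{\theta_{\radius}}{2}\ell^d>0$. Consequently
\[
\mu\bigl(Q_s\cap V(\Hh)=\emptyset\bigr)\leq\exp(-c\ell^{d-1})=\exp\!\bigl(-c(2/\sqrt{d})^{d-1}s^{d-1}\bigr).
\]

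Finally, for the remaining range $2s/\sqrt{d}<s_0$, the probability is trivially bounded by $1$, and this can be absorbed into a sufficiently large multiplicative constant $C$ in front of $\exp(-C's^{d-1})$. Choosing $C':=c(2/\sqrt{d})^{d-1}$ and $C$ large enough (depending on $s_0$ and $C'$) yields the claim uniformly in $s>0$. There is no real obstacle here beyond carefully invoking Proposition~\ref{prop:Hn.growth}; the $s^{d-1}$ rate is precisely the surface-order cost one expects, matching the concentration already proved by Penrose.
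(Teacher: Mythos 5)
Your proof is essentially correct and carries out exactly the derivation the paper indicates: the paper states this lemma ``as a consequence of the last result'' (Proposition~\ref{prop:Hn.growth}) and defers the written proof to \citet[Lemma~3.3]{yao2011}, so you have filled in the implicit argument rather than departed from it. Two small points deserve tightening. First, Proposition~\ref{prop:Hn.growth} is stated for $\varepsilon \in (0,1/2)$, so $\varepsilon = 1/2$ is not an admissible choice; any $\varepsilon \in (0,1/2)$, say $\varepsilon = 1/4$, works since all you need is $(1-\varepsilon)\theta_{\radius}>0$. Second, with $\ell = 2s/\sqrt{d}$ the corners of the closed cube $[-\ell/2,\ell/2]^d$ sit at distance exactly $s$ from the origin, so this cube is not contained in the open ball $B_s(o)$ and the inclusion of events does not hold literally (only up to a null set, since the PPP almost surely places no point on the finite set of corners). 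Taking $\ell$ strictly smaller, e.g.\ $\ell = s/\sqrt{d}$, removes the issue and still yields a bound of the form $\exp(-C' s^{d-1})$. With these cosmetic adjustments the proof is complete and matches the route the paper intends.
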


Let $\mathscr{P}(x,y)$ denote the set of self-avoiding paths from $x$ to $y$ in $\Hh$. The simple length of a path $\gamma = (x=x_0, x_1, \dots, x_m=y) \in \mathscr{P}(x,y)$ is denoted by $|\gamma|=m$.

Write $\dist(x,y)$ for the $\Hh$-distance between $x,y \in V(\Hh)$ given by
\[
    \dist(x,y) = \inf\{|\gamma|:\gamma \in \mathscr{P}(x,y)\}.
\]

Let $x \in \R^d$, then we define $q: \R^d \to V(\Hh)$ by
\begin{equation} \label{def:q.function}
    q(x) := \argmin_{y \in V(\Hh)}\{\|y-x\|\}
\end{equation}
the closest point to $x$ in the infinity cluster. 
Observe from \eqref{def:q.function} that $q$ may be multivalued for some $x \in \R^d$. In that case, we assume that $q(x)$ is uniquely defined by an arbitrarily fixed outcome of \eqref{def:q.function}. Hence $q$ induces a Voronoi partition of $\R^d$ with respect to $\Hh$, see Figure~\ref{fig:RGG_Voronoi} for an illustration.

\begin{figure}[!ht]
    \centering
    \fbox{\includegraphics[scale=0.245]{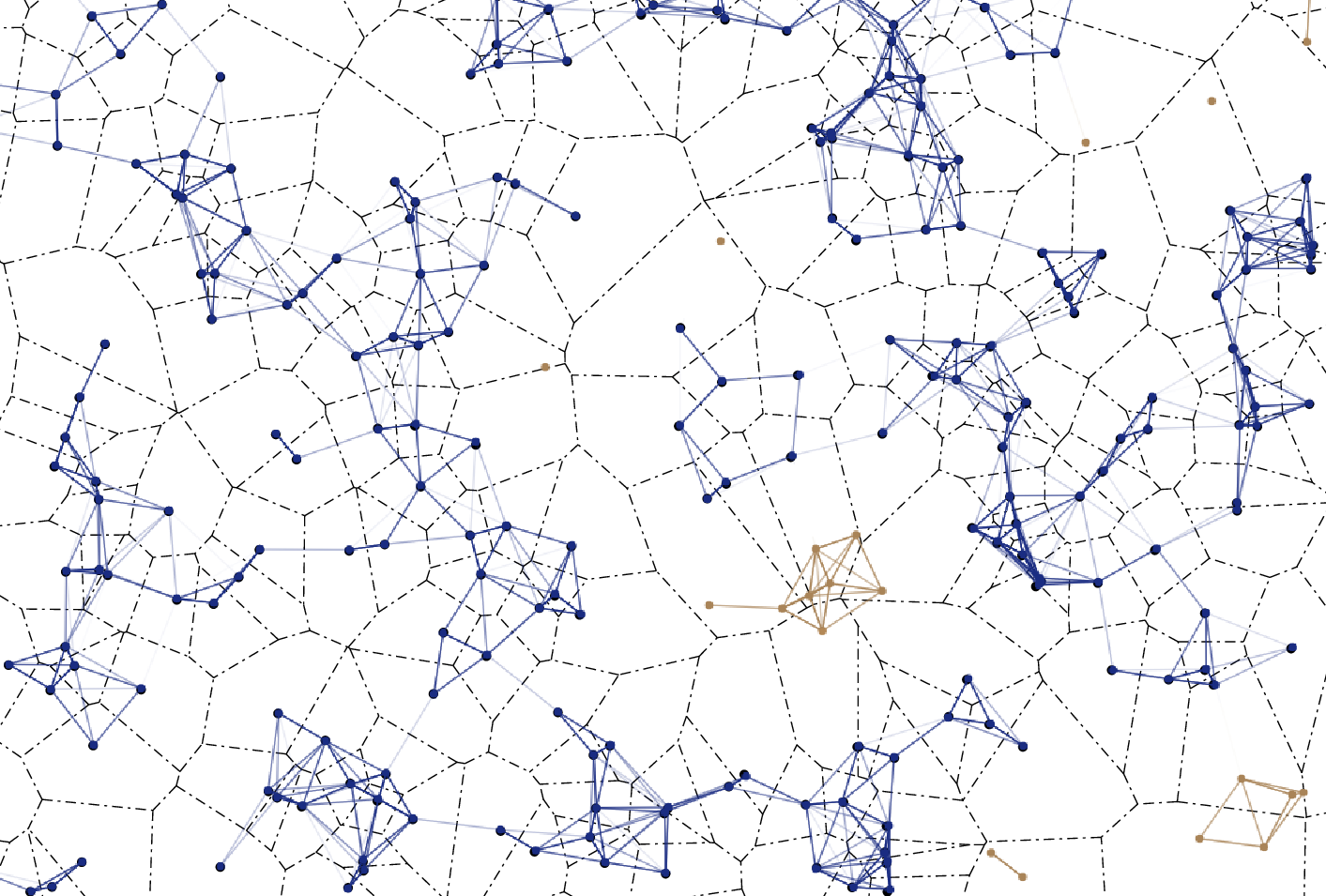}}
    \caption{A random geometric graph on $\R^2$ with the Voronoi partition generated by the infinite connected component $\Hh$ (in blue).}
    \label{fig:RGG_Voronoi}
\end{figure}

We now extend the domain of the $\Hh$-distance by defining $\dist(x,y):=\dist(q(x),q(y))$ for every $x,y \in \R^d$. The proposition below can be immediately adapted from the proof of~\citet[Theorem 2.2]{yao2011} by applying properties of Palm calculus and Lemma \ref{lm:ball.intersecting.infinite.comp}.
\begin{proposition}\hspace{-0.1cm}{\rm{(Adapted from~\citet[Theorem 2.2]{yao2011}).}} \label{prop:linearity.chem.dist}
    Let $d \geq 2$ and $\radius>\radius_c$. Then there exists $\rho_{\radius} > 0$ depending on $\radius$ such that, $\mu$-\textit{a.s.}, for all $x \in \R^d$,
    \[
        \lim_{\substack{\|y\|\uparrow + \infty}}\frac{\dist(x,y)}{\|y-x\|} = \rho_{\radius}.
    \]
\end{proposition}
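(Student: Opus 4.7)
The plan is to apply Kingman's subadditive ergodic theorem to the graph distance along a fixed direction, then exploit rotational symmetry to identify the limiting constant. Fix $u \in S^{d-1}$ and consider the array $X_{m,n} := \dist(mu, nu)$ for $0 \le m < n$, where $\dist$ is defined via the closest-point map $q$ as in the paragraph preceding the statement. Subadditivity $X_{m,n} \le X_{m,k} + X_{k,n}$ is immediate from path concatenation in $\Hh$. Stationarity of the shifts by $u$ holds because $\G_{\radius}\circ\tht_u \stackrel{d}{=} \G_{\radius}$, and ergodicity (in fact mixing) of these shifts follows from Lemma~\ref{lm:PPP.mixing}. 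Hence Kingman's theorem produces a deterministic $\rho_u \in [0,+\infty]$ with $\dist(o,nu)/n \to \rho_u$ $\mu$-a.s. along $n \in \N$.

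Finiteness of $\rho_u$ is the delicate step, and this is where Palm calculus and Lemma~\ref{lm:ball.intersecting.infinite.comp} enter as in~\citet{yao2011}: working under the Palm distribution of $\Poi$ at $o$ conditioned on $o \in V(\Hh)$ restores genuine stationarity under shifts by vertices of $\Poi$, and then $\E[X_{0,1}] < +\infty$ is established by constructing a path from $q(o)$ to $q(u)$ through overlapping blocks of $\Hh$, with block failures controlled by the exponential tail of Lemma~\ref{lm:ball.intersecting.infinite.comp}. Positivity of $\rho_u$ is easier: any $\Hh$-path realizing $\dist(q(o),q(nu))$ uses at most that many edges, each of Euclidean length strictly less than $\radius$, so $\|q(nu)-q(o)\| < \radius \cdot \dist(o,nu)$; combined with $\|q(nu)-q(o)\|/n \to 1$ $\mu$-a.s.\ (Borel--Cantelli against Lemma~\ref{lm:ball.intersecting.infinite.comp}), this yields $\rho_u \ge 1/\radius > 0$. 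Rotational invariance of the PPP (Remark~\ref{rmk:isometry}) forces $\dist(o,nu) \stackrel{d}{=} \dist(o,nu')$ for all unit vectors $u,u'$, so $\rho_u$ does not depend on $u$; call the common value $\rho_{\radius}$.

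Finally, I would upgrade the convergence along integer scales in a fixed direction to the statement's claim (arbitrary starting point $x$, arbitrary $y$ with $\|y\|\to +\infty$) by the classical compactness argument: for $\varepsilon>0$, pick a finite $\varepsilon$-net $\{u_1,\dots,u_k\}$ of $S^{d-1}$, intersect the full-measure events on which $\dist(o,nu_i)/n \to \rho_{\radius}$, and interpolate to arbitrary $y$ using the one-Lipschitz bound $|\dist(o,y)-\dist(o,y')| \le \dist(y,y')$ together with the linear control $\dist(y,y') = O(\|y-y'\|)$ obtained again via Palm calculus and Lemma~\ref{lm:ball.intersecting.infinite.comp}. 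Translation invariance then reduces a general $x$ to $x=o$. The main obstacle, unsurprisingly, is the inherent lack of pointwise stationarity: vertices of $\Poi$ do not lie at deterministic positions, so $X_{m,n}$ is not literally a stationary sequence, and it is precisely the combination of Palm calculus with the tail estimate of Lemma~\ref{lm:ball.intersecting.infinite.comp} that bridges between the true process and its discretization along $\R u$.
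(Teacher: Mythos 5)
Your outline matches the intended proof: the paper gives no argument for this proposition beyond the remark that it follows by adapting \citet[Theorem 2.2]{yao2011} via Palm calculus and Lemma~\ref{lm:ball.intersecting.infinite.comp}, and Kingman along a ray, isotropy to remove the direction dependence, and an $\varepsilon$-net with a Lipschitz interpolation is exactly that adaptation. Two small touch-ups: the array $\dist(mu,nu)=\dist(o,(n-m)u)\circ\tht_{mu}$ is genuinely stationary because the shift is by a deterministic vector (so Palm calculus is needed only for the integrability input $\E[\dist(o,u)]<\infty$, not to ``restore stationarity''), and the ``$\mu$-a.s.\ for all $x$'' uniformity should be obtained from the triangle inequality $|\dist(x,y)-\dist(o,y)|\le\dist(o,x)<\infty$ on the single full-measure event constructed for $x=o$, rather than from translation invariance, which would only yield an $x$-dependent null set.
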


The constant $\rho_{\radius}$ is called \emph{stretch factor} of $\Hh$. Observe that $\rho_{\radius} \geq 1/\radius$. Due to the subadditivity of the $\Hh$-distance, one can easily see that $\E_\mu[\dist(o, z)]$ with $\|z\|=1$ is an upper-bound for $\rho_{\radius}$.

We have the following result about the tail behaviour of $\dist(o, z)$.
\begin{lemma} \label{lm:chem.all}
    Let $d \geq 2$ and $\radius>\radius_c$. Then, there exist $c_1,c_2>0$ and $\beta' >1$ such that, for all $x \in \R^d$ and every $t> \beta' \|x\|$,
    \[
        \mu\big(\dist(o,x) \geq t\big)  \leq c_1\exp(-c_2 t).
    \]
\end{lemma}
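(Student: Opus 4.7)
The lemma is of Antal--Pisztora type: it upgrades the almost-sure linearity of Proposition~\ref{prop:linearity.chem.dist} to a quantitative exponential-tail estimate. My plan is a block renormalization built on the tools already collected in this section, namely Proposition~\ref{prop:Hn.growth}, Lemma~\ref{lm:ball.intersecting.infinite.comp}, and the uniqueness of $\Hh$.

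\emph{Good boxes.} Fix a mesoscopic scale $L>0$, to be tuned at the end. Tile $\R^d$ by cubes $Q_z := Lz + [0,L)^d$ for $z \in \Z^d$, and write $Q_z^{*} := Lz + [-L,2L)^d$ and $Q_z^{**} := Lz + [-2L,3L)^d$ for their enlargements. Declare $Q_z$ \emph{good} if (i) the density of $V(\Hh) \cap Q_z^{*}$ lies within $(1 \pm \tfrac{1}{2})\theta_{\radius}$; (ii) all vertices of $V(\Hh) \cap Q_z^{*}$ belong to a single connected component of $\Hh \cap Q_z^{**}$; and (iii) any two such vertices are joined inside $\Hh \cap Q_z^{**}$ by a self-avoiding path of at most $KL$ edges, for a constant $K = K(\radius,d)$ fixed in advance. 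Proposition~\ref{prop:Hn.growth}, Lemma~\ref{lm:ball.intersecting.infinite.comp}, and the internal chemical-distance argument of~\citet{yao2011} jointly give $\mu(Q_z \text{ is bad}) \leq C\exp(-cL^{d-1})$, uniformly in $z$. Moreover, goodness of $Q_z$ is measurable with respect to $\Poi \cap Q_z^{**}$, so boxes whose double enlargements are disjoint are independent.

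\emph{Concatenation and Peierls argument.} Given $x \in \R^d$, pick a nearest-neighbour chain $z_0,\dots,z_N \in \Z^d$ with $o \in Q_{z_0}$, $x \in Q_{z_N}$, $\|z_i - z_{i+1}\|_\infty = 1$, and $N \leq C_1 \|x\|/L$. If every $Q_{z_i}$ is good, choose a vertex $w_i \in V(\Hh) \cap Q_{z_i}$; since $w_i, w_{i+1} \in Q_{z_i}^{*}$, item~(iii) supplies a path of at most $KL$ edges in $\Hh$ joining them, whose concatenation (together with short detours from $q(o)$ to $w_0$ and from $w_N$ to $q(x)$) yields $\dist(o,x) \leq K' \|x\|$ for $K' := 2K C_1$. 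To promote this to an exponential-in-$t$ bound, set $\beta' := 2K'$ and consider $t > \beta' \|x\|$. If $\dist(o,x) \geq t$, no local rerouting through good boxes can certify this distance, and one argues inductively that there must exist a $*$-connected cluster of bad boxes meeting $[o,x]$ of cardinality at least $m \asymp (t - K' \|x\|)/L$. The number of such clusters anchored within the $O(\|x\|/L)$ boxes touching $[o,x]$ is at most $C_2 (\|x\|/L)(2 \cdot 3^d)^m$, and by the finite-range independence each has probability at most $\bigl(C \exp(-cL^{d-1})\bigr)^{m/K_2}$, with $K_2$ absorbing the interaction range. Choosing $L$ large enough that $2 \cdot 3^d C^{1/K_2} \exp(-cL^{d-1}/K_2) < 1/2$, a geometric summation in $m$ produces $\mu(\dist(o,x) \geq t) \leq c_1 \exp(-c_2 t)$ for all $t > \beta' \|x\|$.

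\emph{Main obstacle.} The technical crux is item~(iii) of goodness: internal chemical distance must be linear in $L$ with stretched-exponential confidence $1 - \exp(-cL^{d-1})$. Proposition~\ref{prop:Hn.growth} supplies enough vertices inside $Q_z^{*}$, but connecting them efficiently within $Q_z^{**}$ requires an inner renormalization to tame the RGG-specific pathologies of arbitrarily large empty regions and unbounded local degrees. This inner argument is essentially the one carried out in~\citet{yao2011} and adapts with minor changes; once (iii) is secured, the macroscopic Peierls step above is entirely standard.
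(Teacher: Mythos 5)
Your renormalization sketch takes a genuinely different (and much longer) route than the paper. The paper's proof is essentially a two-line reduction: it invokes \cite[Lemma~3.4]{yao2011}, which already provides the exponential tail bound
\[
\mu_{v,w}\big(v \leftrightsquigarrow w,\ \overline{\dist}(v,w) \ge t\big) \le \overline c_1 \exp(-\overline c_2 t), \quad t \ge \beta'\|v-w\|/2,
\]
under the two-point Palm measure for the simple $\G_{\radius}$-distance, and then transfers this to $\dist(o,x)=\dist(q(o),q(x))$ by bounding $\|q(o)\|$ and $\|q(x)-x\|$ via Lemma~\ref{lm:ball.intersecting.infinite.comp} and summing over the (polynomially many by Campbell's formula) candidate pairs $v\in\mathcal{B}_{t/(2\beta')}(o)$, $w\in\mathcal{B}_{t/(2\beta')}(x)$. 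Your plan instead attempts to re-derive the Antal--Pisztora estimate from scratch, which is precisely the content of what the paper imports wholesale.

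Beyond that, your sketch has a genuine gap. You declare $Q_z$ good using conditions stated in terms of $V(\Hh) \cap Q_z^{*}$, and then assert that ``goodness of $Q_z$ is measurable with respect to $\Poi \cap Q_z^{**}$.'' It is not: $\Hh$ is the \emph{infinite} connected component, and whether a given point of $\Poi\cap Q_z^{*}$ lies in $\Hh$ depends on the configuration arbitrarily far away. Consequently the finite-range independence you rely on for the Peierls summation is false as stated. The standard fix (used in \citet{yao2011} and related works) is to phrase goodness in terms of a \emph{local} crossing cluster inside $Q_z^{**}$, and then carry an additional argument showing that, on a global good event, these local clusters glue into $\Hh$; this is not a ``minor change.'' Two smaller issues: your Peierls step concludes that there is a single $*$-connected bad cluster of cardinality $\asymp (t-K'\|x\|)/L$, when in fact one only controls the total volume of bad clusters intersecting the chain; and your ``short detours from $q(o)$ to $w_0$'' tacitly assume $q(o)$ is close to $o$, which must be controlled separately (the paper does this explicitly with the $\mu(\|q(o)\|\ge t/(2\beta'))$ term). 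Since you acknowledge that the decisive item (iii) is ``essentially the one carried out in \citet{yao2011},'' the economical move is the paper's: cite that estimate and perform the transfer, rather than rebuild the whole renormalization scaffold.
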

\begin{proof}
    Let us write $\mu_{v,w}$ for the Palm measure $\mu \ast \delta_v \ast \delta_w$ for any given $v,w \in \R^d$. Set $\overline{\dist}$ to be the simple $\G_{\radius}$-distance. In what follows, $v \leftrightsquigarrow w$ stands for the existence of a path between $v$ and $w$ in $\G_{\radius}$. It is clear that $\dist(v,w)=\overline{\dist}(v,w)$ whenever $v,w\in V(\Hh)$. By \cite[Lemma 3.4]{yao2011}, there exist $\overline{c}_1,\overline{c}_2>0$ and $\beta'>1$ such that
    \begin{equation} \label{eq:Palm.bound}
        \mu_{v,w} \big(v \leftrightsquigarrow w , \text{~and~} \overline{\dist}(v,w) \geq t\big) \leq \overline{c}_1 \exp(-\overline{c}_2t)
    \end{equation}
    for all $t \geq \beta' \|v-w\|/2$. Consider now $\mathcal{B}_r(z):= B_r(z) \cap \mathcal{P}$. We apply Lemma \ref{lm:ball.intersecting.infinite.comp},  \eqref{eq:Palm.bound}, and Campbell's theorem to obtain that there exist $C,C'>0$ such that
    \begin{align} 
        \mu\big(\dist(o,x) \geq t\big) &\begin{multlined}[t]\leq \mu\big(\|q(o)\| \geq t/(2\beta')\big) + \mu\big(\|q(x)-x\| \geq t/(2\beta')\big) \\+ \mu\left(\bigcup_{\substack{v \in \mathcal{B}_{t/(2\beta')}(o), \, w \in \mathcal{B}_{t/(2\beta')}(x)}}\{v \leftrightsquigarrow w \text{~and~} \overline{\dist}(v,w) \geq t\}\right)\end{multlined} \nonumber\\
        &\leq 2 C \exp\big({-C't/(2\beta')}\big) + \overline{c}_1\frac{ \upupsilon_d^2}{2^{2d}}t^{2d} \exp({-\overline{c}_2t}) \label{eq:chem.prob.upper.bound}
    \end{align}
    for all $t \geq \beta' \|x\|$.
    Hence, we can conclude the proof of \eqref{eq:chem.prob.upper.bound} by suitably choosing $c_1,c_2>0$.
\end{proof}

Next, let us define for every $x \in \R^d$ the quantity
\[
    W_n^x := \{ \text{self-avoiding paths of length } n \text{ in } \G_{\radius} \text{ starting at }x\}
\]
and note that using \citet[Theorem 4.6.11]{jahnel2020}), we have that
\[
    \E_\mu[|W_n^x|]=(\upupsilon_dR^d)^n.
\]

Consider now $\big|W_n^{q(o)}\big|$, the number of self-avoiding paths in $\Hh$ starting at $q(o)$. We apply the previous result to prove the following lemma.

\begin{lemma} \label{lm:Wn.upperbound}
    Let $d \geq 2$, $\radius > \radius_c$, and $\kappa>1$. Then, one has $\mu$-a.s. that for sufficiently large $n \in \N$,
    \[
        \big|W_n^{q(o)}\big| < \left(\kappa ~ \upupsilon_d \radius^{d}\right)^n.
    \]
\end{lemma}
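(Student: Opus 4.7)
The plan is to combine a first-moment bound on $|W_n^v|$ with a union bound over the (random) finite set of candidates for $q(o)$, then apply Borel--Cantelli. The key observation is that although we want an almost-sure bound at the random vertex $q(o)$, we already know from Lemma~\ref{lm:ball.intersecting.infinite.comp} that $\|q(o)\|$ is stochastically small, so with overwhelming probability $q(o)$ lies inside a ball that grows only polynomially in $n$.

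Concretely, I would first apply the Campbell--Mecke formula to the given moment identity $\E_\mu[|W_n^x|]=(\upupsilon_d \radius^d)^n$ (interpreted as a Palm expectation at $x$) to obtain, for any $s>0$,
\[
\E_\mu\!\left[\sum_{v \in \Poi \cap B_s(o)} |W_n^v|\right] = \upupsilon_d s^d \, (\upupsilon_d \radius^d)^n,
\]
using translation invariance and $\lambda=1$. Markov's inequality then yields
\[
\mu\!\left(\exists\, v \in \Poi\cap B_s(o):\ |W_n^v| \geq (\kappa \upupsilon_d \radius^d)^n\right) \leq \upupsilon_d s^d\, \kappa^{-n}.
\]

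Next I would set, say, $s_n = n^2$ and consider the two bad events
\[
A_n := \{q(o)\notin B_{s_n}(o)\}, \qquad
B_n := \bigl\{\exists\, v \in \Poi \cap B_{s_n}(o):\ |W_n^v| \geq (\kappa \upupsilon_d \radius^d)^n\bigr\}.
\]
By Lemma~\ref{lm:ball.intersecting.infinite.comp}, $\mu(A_n)\leq C\exp(-C' s_n^{d-1})$, which is summable; and by the Markov bound above, $\mu(B_n)\leq \upupsilon_d n^{2d}\kappa^{-n}$, which is summable since $\kappa>1$. Borel--Cantelli gives that $\mu$-a.s.\ neither $A_n$ nor $B_n$ holds for sufficiently large $n$, which in particular forces $q(o)\in\Poi\cap B_{s_n}(o)$ and consequently $|W_n^{q(o)}| < (\kappa \upupsilon_d \radius^d)^n$, as desired.

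The only real subtlety is justifying the expectation identity in Palm form and handling the randomness of $q(o)$ via the union bound over $\Poi\cap B_{s_n}(o)$; the geometric decay of $\kappa^{-n}$ easily absorbs any polynomial growth in $s_n$, so there is significant slack in the choice of $s_n$. No more refined tools seem to be needed.
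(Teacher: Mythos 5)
Your proof is correct and follows essentially the same route as the paper: bound $\|q(o)\|$ with Lemma~\ref{lm:ball.intersecting.infinite.comp}, apply Markov plus Campbell's formula to control $\max_{v \in \Poi\cap B_{s_n}(o)}|W_n^v|$, and conclude by Borel--Cantelli. The only cosmetic differences are that the paper uses the ball radius $s_n = n$ rather than $n^2$ (either works, since $\kappa^{-n}$ kills any polynomial), and applies Markov pointwise under the Palm measure before summing via Campbell, whereas you first sum via Campbell and then apply Markov to the aggregate — these are the same computation in a different order.
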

\begin{proof}
    Let $n\in\N$ and define the events
    \[
        \mathsf{A}_n := \left\{|W_n^{q(o)}| \geq (\kappa\upupsilon_d\radius^d)^n\right\}.
    \]
    Recall the notation $\mathcal{B}_n(o):= B_n(o) \cap \mathcal{P}$. Then
    \[
        \mathsf{A}_n \subseteq \Big(\bigcup_{ v \in \mathcal{B}_n(o)} \big\{|W_n^{v}| \geq (\kappa\upupsilon_d\radius^d)^n, \|q(o)\|< n \big\}\Big)\cup \{\|q(o)\| \geq n \}.
    \]
    Hence, by Markov's inequality, one has for any given $x \in \R^d$ that
    \[
        \mu\big(|W_n^x| \geq (\kappa\upupsilon_d\radius^d)^n\big) \leq \E_\mu \big[|W_n^x|\big]/(\kappa\upupsilon_d\radius^d)^n = 1/\kappa^n.
    \]
    Thus, by Lemma \ref{lm:ball.intersecting.infinite.comp} and Campbell's theorem, there exist $C,C'>0$ such that
    \[
        \sum_{n=1}^{+\infty}\mu(\mathsf{A}_n) \leq \upupsilon_d\sum_{n=1}^{+\infty}\frac{n^d}{\kappa^n} +  C\sum_{n=1}^{+\infty}{e^{-C'n^{d-1}}} < +\infty,
    \]
    and hence, an application of the Borel--Cantelli lemma completes the proof.
\end{proof}

\section{First-Passage Percolation} \label{sec:FPP}

We proceed to formally define our process. Let $\{\tau_e\}_{e \in E}$ be a family of independent and identically distributed random variables taking values in the time set $[0,+\infty)$. We say that $\tau_e$ is the passage time of the edge $e \in E(\Hh)$. 

Set $(\Om, \A, \p)$ to be the joint probability space associated with the construction of the random geometric graph $\G_{\radius}$ and the independent assignment of the random passage times $\{\tau_e\}_{e\in E}$. The joint probability space can be constructed as a product space and we will write $\E=\E_{\p}$ for short.

Given any path $\gamma=(x,x_1,\dots,x_m,y)\in \mathscr{P}(x,y)$ for $x,y \in V(\Hh)$, we write $e \in \gamma$ for an edge $e \in E(\Hh)$ between a pair of consecutive vertices of $\gamma$. We denote the passage time of the path $\gamma$ by
\[
    T(\gamma) := \sum_{e \in \gamma}\tau_e.
\]
The \emph{passage time} between $x,y \in \R^d$ is then defined by the random variable
\[
    T(x,y) := \inf\left\{ T(\gamma) : \gamma\in \mathscr{P}\big(q(x),q(y)\big) \right\}.
\]
In fact, we can see later that $T(x,y)$ is a random pseudo-metric when associated with a group action. To avoid cumbersome notation, we set $T(x):= T(o,x)$ for all $x \in \R^d$.

Using these definitions, we have 
\[
    H_t := \{x \in \R^d: T(x) \leq t\}
\]
for the set of Voronoi cells induced by $\Hh$ reached up to time $t$ with the FPP starting in $q(o)$.

Next, consider $\tht: \R^d \curvearrowright \Om$ to be an extension of the group action introduced in Section~\ref{sec:RGG} such that $\tht_z$ will induce $\tau_{\{x,y\}} \mapsto \tau_{\{x-z,y-z\}}$ independently in the product space. It is easily seen that $\tht$ inherits the ergodic property of the previously defined group action and that Remark \ref{rmk:isometry} still holds for actions on $\Om$ associated with isometries by extending them in the same fashion. Then, we observe that $T(x,x+y)= T(y)\circ\tht_x$ and that the subadditivity 
\begin{equation} \label{eq.subadditivity}
        T(x+y) \leq T(x) + T(y)\circ\tht_x
    \end{equation}
for all $x,y \in \R^d$ is straightforward.


We have the following lemma. 

\begin{lemma} \label{lm:FPP.mean.lower.bound}
    Let $d \geq 2$, $\radius>\radius_c$, and $\p(\tau=0) < 1/(\upupsilon_d \radius^d)$. Then, there exists $a>0$ depending on $\radius$ such that, for all $x \in \R^d$,
    \[
        a \|x\| \leq \E[T(x)].
    \]
\end{lemma}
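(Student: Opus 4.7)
The plan is to derive an almost-sure linear lower bound $T(x) \gtrsim \|x\|$ for large $\|x\|$ via a first-moment count of self-avoiding paths with small passage time, then upgrade to an expectation bound via Fatou's lemma, and finally extend to all of $\R^d$ by subadditivity.

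Since $\tau \geq 0$, the Laplace transform $M(\theta) := \E[e^{\theta \tau}]$ decreases monotonically to $p_0 := \p(\tau=0) < 1/(\upupsilon_d \radius^d)$ as $\theta \to -\infty$. Pick $\kappa > 1$ with $\kappa p_0 \upupsilon_d \radius^d < 1$, then choose $\theta_0 < 0$ and $a > 0$ small enough that $\eta := \kappa \upupsilon_d \radius^d \cdot e^{-\theta_0 a} M(\theta_0) < 1$. For any fixed self-avoiding path $\gamma$ of length $n$, Markov's inequality gives $\p(T(\gamma) \leq an) \leq (e^{-\theta_0 a} M(\theta_0))^n$, since $T(\gamma)$ is a sum of $n$ i.i.d.\ copies of $\tau$. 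Conditioning on the graph, so that $W_n^{q(o)}$ is measurable while the $\tau_e$ remain independent, the conditional expected number of $\gamma \in W_n^{q(o)}$ with $T(\gamma) \leq an$ is at most $|W_n^{q(o)}| \cdot (e^{-\theta_0 a} M(\theta_0))^n$. Combining with Lemma~\ref{lm:Wn.upperbound}, which guarantees $|W_n^{q(o)}| \leq (\kappa\upupsilon_d\radius^d)^n$ almost surely for $n$ large, Markov's inequality and the Borel--Cantelli lemma then yield that $\p$-a.s., for all $n$ sufficiently large, every self-avoiding path of length $n$ starting at $q(o)$ satisfies $T(\gamma) > an$.

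For $y \in V(\Hh)$ whose graph distance $\dist(q(o), y)$ exceeds the random threshold produced above, every self-avoiding path from $q(o)$ to $y$ has length at least $\dist(q(o), y)$, hence passage time strictly greater than $a\cdot\dist(q(o), y)$; taking the infimum gives $T(o, y) \geq a \dist(o, y)$. Combined with Proposition~\ref{prop:linearity.chem.dist}, this yields $T(x) \geq (a\rho_{\radius}/2)\|x\|$ $\p$-a.s.\ for all $\|x\|$ sufficiently large. Fatou's lemma applied along any sequence $\|x_k\| \to \infty$ then gives $\liminf_{\|x\| \to \infty} \E[T(x)]/\|x\| \geq a\rho_{\radius}/2$, so there exist $a' > 0$ and $R_0 > 0$ with $\E[T(x)] \geq a'\|x\|$ whenever $\|x\| \geq R_0$. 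For $x$ with $0 < \|x\| < R_0$, pick $m \in \N$ with $m\|x\| \geq R_0$; iterating the subadditivity~\eqref{eq.subadditivity} and using translation invariance of the joint law gives $\E[T(mx)] \leq m \E[T(x)]$, so $\E[T(x)] \geq \E[T(mx)]/m \geq a'\|x\|$. The case $x = o$ is trivial.

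The main obstacle is disentangling the two independent sources of randomness in the counting step: the cardinality $|W_n^{q(o)}|$ is a functional of $\Poi$, while the $\tau_e$ form an independent i.i.d.\ family. Conditioning on the graph lets the Chernoff-type path estimate act on the i.i.d.\ passage times alone, after which Lemma~\ref{lm:Wn.upperbound} supplies the matching a.s.\ upper bound on $|W_n^{q(o)}|$ needed to close the first-moment argument.
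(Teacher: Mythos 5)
Your argument is correct, and its engine is the same as the paper's: a first-moment count of self-avoiding paths of length $n$ from $q(o)$ with atypically small passage time, balancing the entropy bound $|W_n^{q(o)}|\le(\kappa\,\upupsilon_d\radius^d)^n$ from Lemma~\ref{lm:Wn.upperbound} against a Chernoff estimate whose per-edge rate beats $(\upupsilon_d\radius^d)^{-1}$ precisely because $\p(\tau=0)<1/(\upupsilon_d\radius^d)$. You differ in two implementation choices. First, your Chernoff bound uses exponential tilting with a negative parameter, $\p(T(\gamma)\le an)\le\big(e^{-\theta_0a}M(\theta_0)\big)^n$ with $M(\theta_0)\downarrow\p(\tau=0)$; the paper instead truncates, using right-continuity of the distribution function to pick $\delta>0$ with $\p(\tau\le\delta)<1/(\kappa\,\upupsilon_d\radius^d)$ and then applying a binomial Chernoff bound. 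These are interchangeable. Second, and more substantially, you convert the count into an almost-sure statement via Borel--Cantelli and then pass to expectations with Fatou, invoking Proposition~\ref{prop:linearity.chem.dist} to translate $\dist(o,y)$ back into $\|y\|$; the paper stays at the level of a fixed $x$, using Lemma~\ref{lm:ball.intersecting.infinite.comp} to guarantee (with probability close to one) that every path from $q(o)$ to $q(x)$ has length at least $m_x=\lceil\|x\|/(2\radius)\rceil$, and concludes directly that $\p(T(x)\le cm_x)\le 1/2$, hence $\E[T(x)]\ge cm_x/2$. Your route thus leans on the heavier input of the stretch factor $\rho_{\radius}$, where the paper only needs the elementary bound $\dist(q(o),q(x))\ge\|q(o)-q(x)\|/\radius$; in exchange you obtain along the way an almost-sure linear lower bound on $T(x)$, which is stronger than what the lemma asks for. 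The closing reduction to small $\|x\|$ by iterated subadditivity and stationarity is identical to the paper's opening reduction.
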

\begin{proof}
    It suffices to prove this statement for large $\|x\|$ as due to subadditivity~\eqref{eq.subadditivity} and stationarity of $T(nx,(n+1)x)$ for all $n \in \N$ we have 
    \[
        {\E[T(mx)]}/{\|mx\|} \le \sum_{i=1}^m  \E[T((i-1)x,ix)]/\|mx\| = {\E[T(x)]}/{\|x\|}.
    \]
   Define the event
    \begin{align*}
        A^1_x:=  \big\{\max\{\|q(o)\|,\|q(x)-x\|\} \le {\|x\|}\big/4\big\}.
    \end{align*}
    In order to simplify the notation, let us abbreviate $m_x=\lceil\|x\|/(2\radius)\rceil$.
    Note that on~$A^1_x$, every path~$\gamma \in \mathscr{P}\big(q(o),q(x)\big)$ has~$|\gamma| \ge m_x$ and therefore includes a sub-path of length  at least $m_x$.
    Consequently, for any $t>0$,
    \begin{align*}
         \p\left(\left\{T(x) \le t \right\}\cap A^1_x \right) &= \p\Bigg(\Bigg\{ \inf_{\gamma \in \mathscr{P}\big(q(o),q(x)\big)} T(\gamma)\le t \Bigg\}\cap A^1_x \Bigg) \\
          &\le \p\left(\inf_{\gamma \in W_{m_x}^{q(o)}} T(\gamma) \le t\right),
    \end{align*}

    In order to proceed, first observe that, using Chernoff's bound for the binomial distribution with $X \sim \mathrm{Binomial}(n,p)$, we have
   \begin{align*}
        \p(X\le cn) &\le \exp\left(-n\left(c \log \frac{c}{p} + (1-c)\log\frac{1-c}{1-p}\right) \right) \\
        &=\left(p^{-c} (1-p)^{-(1-c)} \cdot c^{c} (1-c)^{(1-c)}    \right)^{-n},
   \end{align*}
   where for $c \rightarrow 0$ the base converges to $(1-p)^{-1}$. Also, because of the right continuity of the cumulative distribution function associated to $\tau$, there exist $\kappa>1$ and $\delta>0$ such that
    \begin{equation*}
        \p(\tau\leq \delta)<1/(\kappa \upupsilon_d \radius^d).
    \end{equation*}
Further, consider a random variable $X' \sim \mathrm{Binomial}(n,\p(\tau>\delta))$ with respect to $\p$.
    Note that $\tau \ge \delta \mathds{1}\{\tau > \delta\}$ and therefore on any (self-avoiding path) of length $|\gamma|=n$ the sum of $n$ i.i.d.~copies of $\tau$, stochastically dominates $\delta X'$.
    
    Therefore, there exists $c>0$ and $\kappa'>1$ such that for all $n\in \N$ and $|\gamma| = n$
    \[
        \p(T(\gamma) \le cn) \le   \p(X' > cn / \delta ) \leq  \big(\kappa' \upupsilon_d \radius^d\big)^{-n}.
    \]
     Fix $1<\kappa''<\kappa'$ and define
     \[
        A^2_x := \{|W_{m_x}^{q(o)}| \le (\kappa''\upupsilon_d \radius^d)^{m_x}\} \mathrm{\ and \ } A_{x} := A^1_x \cap A^2_x. 
     \]
    Then, we have
    \begin{align*}
        \p\left(T(x) \le  cm_x\right)&\le \p\big((A_x)^c\big) + \p\left({A_x} \cap \left\{{{\inf_{\gamma \in W_{m_x}^{q(o)}}}} T(\gamma) \le cm_x \right\}  \right)\\
        &\le  \p\big((A_x)^c\big) + \E\Big[\mathds{1}_{A_x} \cdot \sum_{\gamma \in W_{m_x}^{q(o)}}
         \hspace{-2pt}\mathds{1}\{T(\gamma) \le cm_x\}\Big]\\
        &\le \p\big((A_x)^c\big) +  (\kappa''\upupsilon_d \radius^d)^{m_x} \cdot (\kappa' \upupsilon_d \radius^d)^{-m_x},
    \end{align*}
    where $\p\big((A_x)^c\big)$ can be made arbitrarily small by choosing $\|x\|$ large enough via Lemma \ref{lm:ball.intersecting.infinite.comp} and Lemma \ref{lm:Wn.upperbound}. As $\kappa'' < \kappa'$, the exponent in the second summand is negative and dominates the polynomial term for large $x$. Therefore, there is a $k$ such that for all $x \in \R^d$ with $\|x\|\geq k$,   
    \[
        \p\big(T(x) \leq cm_x\big) \leq 1/2.
    \]
    Setting $a = c/4\radius$ we arrive at the statement $a\|x\| \leq \E[T(x)]$ when $\|x\| >k$.
\end{proof}

\begin{remark} \label{rmk:FPP.mean.upper.bound}
    Observe that {$\E[T(x)] \leq \E[\dist(o,x)]\E[\tau]$ due to the subadditivity and Fubini's Theorem. Moreover, condition \eqref{A2} implies that $\E[\tau] < +\infty$. One can easily see from Proposition \ref{prop:linearity.chem.dist} and $L^1$ convergence given by Kingman's subadditive ergodic theorem \cite{kingman1968} applied to the $\Hh$-distance, that, for all $x \in \R^d$,
    \[
        b:= \rho_{\radius}\E[\tau] \geq \limsup_{n \uparrow \infty} \E[T(nx)]/\|nx\|.
    \]}
\end{remark}

{
Denote by $\p_\xi$ the quenched probability of the propagation model given a realization $\xi \in \Upxi'$. The lemma below ensures the at-least linear growth of the passage times.
 
\begin{lemma} \label{lm:all.growth}
    Let $d \geq 2$, $\radius>\radius_c$, and assume that \eqref{A2} holds. Then, there exist deterministic $\beta>0$ and $\upkappa>1$ such that, for every $x,y \in \R^d$, and for each $\xi \in \Upxi'$,
    \[
        \p_{\xi}\big(T(x,y) \geq t\big) \leq  t^{-(d+\upkappa)}
    \]
    for all $t \geq \beta \dist(x,y)$.
\end{lemma}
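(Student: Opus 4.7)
The plan is to reduce $T(x,y)$ to a sum of i.i.d.~passage times along a geodesic in $\Hh$, and then exploit the polynomial moment bound from \eqref{A2} via a high-order Markov inequality. Fix $\xi \in \Upxi'$; then $n := \dist(x,y)$ is deterministic under $\p_\xi$. If $n=0$, then $q(x)=q(y)$ and $T(x,y)=0$, so there is nothing to prove. Otherwise I pick any self-avoiding path $\gamma^\ast$ in $\Hh$ from $q(x)$ to $q(y)$ with $|\gamma^\ast|=n$. Since $T(x,y) \leq T(\gamma^\ast)$ and the edge passage times along $\gamma^\ast$ are $n$ independent copies of $\tau$, I obtain
\[
\p_\xi\big(T(x,y) \geq t\big) \leq \p\Big(\sum_{i=1}^n \tau_i \geq t\Big).
\]

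Next, I will center and apply the Marcinkiewicz--Zygmund inequality. Write $m := \E[\tau]$, finite by \eqref{A2}, and choose $\beta > 2m$ so that $t \geq \beta n$ forces $t - nm \geq t/2$. Markov's inequality with the $\eta$-th moment combined with Marcinkiewicz--Zygmund (applicable since $\eta > 2d+2 \geq 2$) yields
\[
\p\Big(\Big|\sum_{i=1}^n(\tau_i - m)\Big| \geq t/2\Big) \leq \frac{2^\eta B_\eta \, \E[|\tau - m|^\eta] \, n^{\eta/2}}{t^\eta},
\]
where $B_\eta$ is an absolute constant and $\E[|\tau - m|^\eta]$ is finite by \eqref{A2} together with the $c_r$-inequality. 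Using $n \leq t/\beta$, the right-hand side collapses to $K \, \beta^{-\eta/2} \, t^{-\eta/2}$ for a constant $K$ depending only on $\eta$ and the law of $\tau$.

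Since $\eta > 2d+2$, I then select $\upkappa \in (1,\, \eta/2 - d)$ and set $\delta := \eta/2 - d - \upkappa > 0$. The target bound $K \beta^{-\eta/2} t^{-\eta/2} \leq t^{-(d+\upkappa)}$ is equivalent to $K \leq \beta^{\eta/2} t^{\delta}$; because $t \geq \beta n \geq \beta$, it suffices to take $\beta$ so large that $K \leq \beta^{\eta - d - \upkappa}$, which is possible since the exponent on $\beta$ is strictly positive. The quenched character of the statement is automatic, because conditional on $\xi$ the path $\gamma^\ast$ is deterministic and all estimates depend on $\xi$ only through $\dist(x,y)$. The only real delicacy---rather than a genuine obstacle---is the bookkeeping needed to obtain the clean exponent $t^{-(d+\upkappa)}$ with no multiplicative prefactor; this step consumes exactly the slack between $\eta/2$ and $d+\upkappa$, which explains the threshold $\eta > 2d+2$.
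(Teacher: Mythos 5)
Your proposal is correct and follows essentially the same route as the paper: pass to a geodesic in $\Hh$ from $q(x)$ to $q(y)$, center the i.i.d.\ edge weights, control the $\eta$-th centered moment of their sum by an inequality giving the $n^{\eta/2}$ growth, and finish with Markov's inequality, absorbing all constants into the choice of $\beta$. The only cosmetic difference is that the paper invokes Rosenthal's inequality (and takes $\upkappa = \eta/2 - d$ exactly, tuning $\beta$ so the prefactor becomes $1$), whereas you invoke Marcinkiewicz--Zygmund plus Jensen (and take $\upkappa$ strictly inside $(1,\eta/2-d)$ to create slack that absorbs the constant); both yield the identical $\dist(x,y)^{\eta/2}$ bound and the same threshold $\eta > 2d+2$.
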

\begin{proof} 
    Let $\gamma \in \mathscr{P}(x,y)(\xi)$ be a geodesic given by the $\Hh$-distance. Then, one has by Markov's inequality that, for $t > \E[\tau]\dist(x,y)$ and $\eta$ from \eqref{A2},
    \begin{equation} \label{eq:upper.geod}
        \p_\xi \big( T(x,y) \geq t \big) \leq \p_\xi \big( T(\gamma) \geq t \big) \leq \frac{\E_{\xi}\left[\left(\sum_{e \in \gamma}\left( \tau_e - \E[\tau]\right)\right)^\eta\right]}{\left(t- \E[\tau]\dist(x,y)\right)^\eta}.
    \end{equation}
    
    Rosenthal's inequality~\cite{rosenthal1970} states that, if~$Y_1,\ldots,Y_n$ are independent random variables with mean zero and finite moment of order~$p$, with~$p > 2$, then
\[
\E\left[\left(\sum_{i=1}^n Y_i\right)^p\right]\le C_p\cdot \max\left\{ \sum_{i=1}^n \E[|Y_i|^p];\;\left(\sum_{i=1}^n \E[(Y_i)^2]\right)^{p/2} \right\},
\]
where~$C_p > 0$ is a constant that depends only on~$p$.
Since \eqref{A2} holds and $(\tau_e-\E[\tau_e])$ are identically distributed for all $e \in \gamma$, it yields
\begin{equation}\label{eq:iid}
\E_{\xi}\Big[\Big(\sum_{e \in \gamma} (\tau_e-\E[\tau])\Big)^\eta\Big]\le C\dist(x,y)^{\eta/2},
\end{equation}
where~$C$ is now a constant that depends both on~$\eta$ and on the distribution of the random variables.

In case~$t \ge 2\E[\tau] \dist(x,y)$ we have~$t-\E[\tau] \dist(x,y) \ge t/2$. Using this and~\eqref{eq:iid}, the right-hand side of \eqref{eq:upper.geod} is smaller than
\[
2^\eta C \dist(x,y)^{\eta/2}t^{-\eta}.
\]
If we also have that $t \ge 4 C^{2/\eta} \dist(x,y)$, this is smaller than~$t^{-\eta/2}$. We have thus proved that
\[
\p_{\xi}\big(T(x,y) \geq t\big) \leq t^{-(d+\upkappa)} \quad \text{for all }t \ge \beta \dist(x,y),
\]
with $\beta := \max\left\{2\E[\tau], 4 C^{2/\eta} \right\}$ and $\upkappa := \eta/2-d>1$.
\end{proof}
}

Before proving our first main theorem, we state and prove the following result. It is an annealed version of the at least linear growth from lemma above in all directions.

\begin{lemma} \label{lm:FPP.seq.conv}
    Let $d \geq 2$, $\radius>\radius_c$. Consider the i.i.d.~FPP on the RGG satisfying \eqref{A2}. Then, there exist constants $\delta, C > 0$, and $\upkappa>1$ such that for all $t>0$ and all $x \in \R^d$, one has
   \[
        \p\left(\sup_{y \in B_{\delta t}(x)} T(x,y) \ge t\right) \le C t^{-\upkappa}.
    \]
\end{lemma}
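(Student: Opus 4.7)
The plan is to discretize the supremum over the uncountable ball $B_{\delta t}(x)$ into a maximum over the finite vertex set $\mathcal V_t := V(\Hh) \cap B_{3\delta t}(x)$, and then apply Lemma~\ref{lm:all.growth} vertex by vertex through a union bound. The parameter $\delta>0$ will be tuned at the end, small enough to match the constants $\beta$ from Lemma~\ref{lm:all.growth} and $\beta'$ from Lemma~\ref{lm:chem.all}.

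The first step uses the identity $T(x,y)=T(q(x),q(y))$ together with the triangle inequality and the optimality of $q(y)$: since $\|q(y) - y\| \le \|q(x) - y\|$, I have $\|q(y)-x\| \le \|q(x) - x\| + 2\|y - x\|$. Hence, on the centering event $E_1 := \{\|q(x)-x\| \le \delta t\}$, every $q(y)$ with $y \in B_{\delta t}(x)$ lies in $\mathcal V_t$, so the supremum is bounded above by $\max_{v\in\mathcal V_t}T(q(x),v)$. Lemma~\ref{lm:ball.intersecting.infinite.comp} bounds $\mu(E_1^\comp)$ by a stretched exponential in $\delta t$, negligible against any polynomial in $t$.

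For the second step, I build an admissible subevent $E_2 \subseteq E_1$ on which both $|\mathcal V_t| \le C_1 t^d$ and $\max_{v \in \mathcal V_t} \dist(q(x), v) \le K \delta t$ hold, for some constant $K > 4\beta'$. The bound on $|\mathcal V_t|$ follows from standard Poisson concentration (or Proposition~\ref{prop:Hn.growth}), while the bound on $\max_v \dist$ follows by applying Lemma~\ref{lm:chem.all} to each pair $(q(x),v)$ --- noting that $\|v - q(x)\| \le 4\delta t$ on $E_1$, so the regime $K\delta t > \beta' \|v-q(x)\|$ is achieved --- together with Campbell's formula to absorb the at most $O(t^d)$ vertices of $\mathcal V_t$. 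The resulting complement satisfies $\mu(E_2^\comp) \le C_2 t^d \exp(-C_3 t)$. Now choose $\delta$ so small that $\beta K \delta \le 1$. Then on $E_2$ every $v \in \mathcal V_t$ satisfies $t \ge \beta \dist(q(x), v)$, so Lemma~\ref{lm:all.growth} gives $\p_\xi(T(q(x), v) \ge t) \le t^{-(d+\upkappa)}$. A union bound over $\mathcal V_t$ yields
\[
\p_\xi\Bigl(\sup_{y \in B_{\delta t}(x)} T(x, y) \ge t\Bigr) \le |\mathcal V_t| \cdot t^{-(d+\upkappa)} \le C_1 t^{-\upkappa} \quad \text{on } E_2,
\]
and integrating in $\xi$ while adding $\mu(E_2^\comp)$ completes the desired polynomial bound.

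The main obstacle I foresee is the joint calibration of $\delta$: it must simultaneously be small enough that $4\delta t$ lies in the regime $s > \beta'\|v-q(x)\|$ where Lemma~\ref{lm:chem.all} furnishes exponential decay, and that $K\delta t \le t/\beta$ so that Lemma~\ref{lm:all.growth} applies to every vertex in $\mathcal V_t$. Once a single $\delta$ meeting both constraints is fixed, the polynomial factor $t^d$ coming from $|\mathcal V_t|$ is absorbed by the quenched tail $t^{-(d+\upkappa)}$, producing the desired decay of order $t^{-\upkappa}$.
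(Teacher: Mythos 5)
Your proposal is correct and follows essentially the same route as the paper's proof: localize $q(y)$ for $y\in B_{\delta t}(x)$ inside a ball of radius $O(\delta t)$ around $x$ via Lemma~\ref{lm:ball.intersecting.infinite.comp}, control the vertex count in that ball (polynomially in $t$) and the chemical distance $\dist(q(x),\cdot)$ (via Lemma~\ref{lm:chem.all}), pick $\delta$ small enough that the quenched tail bound of Lemma~\ref{lm:all.growth} applies to each vertex, then union-bound and absorb the $t^d$ factor into the exponent $d+\upkappa$. The paper's events $G_1, G_2, G_3$ correspond to your $E_1, E_2$ and the final quenched estimate, with the calibration $\delta=(\beta'\beta)^{-1}$ playing the role of your constraint $\beta K\delta\le 1$; the only cosmetic difference is that the paper tracks vertices in $B_{2\delta t}$ where your bookkeeping (correctly) gives $B_{3\delta t}$, a gap-free adjustment of constants.
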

\begin{proof}
    Due to the translation invariance it suffices to prove the lemma for $x = 0$. Let $\delta=(\beta'\beta)^{-1}$ with $\beta'$ and $\beta$ from Lemmas \ref{lm:chem.all} and \ref{lm:all.growth}. Set $c_d>0$ to be such that $B_{2\delta}(o) \subseteq [-c_d/2,c_d/2]^d$ and write $C_d:= 2 \theta_{\radius} c_d$  with $\theta_{\radius}>0$ from Proposition \ref{prop:Hn.growth}. Let us now define the following events:
    \begin{align*}
        G_1 &:= \big\{q(o) \in B_{\delta t}(o)\big\} \cap \big\{|B_{\delta 2t}(o) \cap V(\Hh)| \le C_d \cdot t^d \big\}\\
        G_2 &:= \left\{\sup_{\|y\| \le \delta t}  \dist(o,y) \le t/ \beta\right\}\cap G_1\\
        G_3 &:= \left\{\sup_{\|y\| \le \delta t} T(o,y) \ge t\right\} \cap G_1 \cap G_2
    \end{align*}
        Now,
    \begin{equation} \label{eq:prob.sup.bound}
        \p\left(\sup_{y \in B_{\delta t}(o)} T(o,y) \ge t\right)\le \p(G_3) + \p(G_1^{\mathrm{c}}) +  \p(G_2^{\mathrm{c}}),
    \end{equation}
    where the last two summands decrease exponentially in $t$ due to Proposition \ref{prop:Hn.growth}, and Lemmas \ref{lm:ball.intersecting.infinite.comp} and \ref{lm:chem.all}. By Lemma \ref{lm:all.growth}, there exists $\upkappa>1$ such that
    \begin{equation} \label{eq:prop.G3.bound}
        \p(G_3) \le \E \left[ \mathds{1}\{\xi\in G_1 \cap G_2\} \p_{\xi} \left( \sup_{\|y\| < \delta t} T(y) \ge t \right) \right] \le  C_d t^d/ t^{d+\upkappa}.
    \end{equation}
    Combining \eqref{eq:prop.G3.bound} with \eqref{eq:prob.sup.bound}, the desired bound is obtained by choosing a suitable $C>0$.
\end{proof}

After this preparatory work, we now proceed to prove Theorem~\ref{thm:shape.FPP}. The methods are closely related to standard techniques for shape theorems which can be found in \cite{kesten1986}, for instance.
\begin{proof}[Proof of Theorem \ref{thm:shape.FPP}]
    We begin by verifying properties of $T(nx)$. Note that for every $x \in \R^d$ one has that $\E[T(x)] < +\infty$ by Lemmas \ref{lm:chem.all} and \ref{lm:all.growth}. Recall that the process is mixing on $(\Om, \A,\p,\tht)$ by Lemma \ref{lm:PPP.mixing}. Then, by the subadditivity \eqref{eq.subadditivity}, we apply Kingman's subadditive ergodic theorem to obtain that, $\p$-\textit{a.s.}, for all $x \in \R^d$,
    \begin{equation} \label{eq.FPP.conv.T(nx)}
        \lim_{n\uparrow+\infty}\frac{T(nx)}{n} = \phi (x),
    \end{equation}
    where $\phi:\R^d\to [0,+\infty)$ is a homogeneous and subadditive function given by
    \[
        \phi(x) = \inf_{n\geq 1} \frac{\E[T(nx)]}{n} = \lim_{n\uparrow+\infty} \frac{\E[T(nx)]}{n}.
    \]
    
    Since the process is rotation invariant, there exists a constant $\upvarphi$ (the time constant) such that $\phi(x)=\upvarphi^{-1}\|x\|$ for all $x \in \R^d$. In fact, one has from Lemma \ref{lm:FPP.mean.lower.bound} and Remark \ref{rmk:FPP.mean.upper.bound} that
    \[
        0<a \leq \upvarphi^{-1} \leq b= \rho_{\radius}\E[\tau] < \infty.
    \]

   Let us now prove the $\p$-\textit{a.s.} asymptotic equivalence
   \begin{equation} \label{eq:asymptotic.equivalence}
        \lim_{\|y\|\uparrow+\infty} \frac{T(y)}{\|y\|} = \frac{1}{\upvarphi}.
    \end{equation}

    For the approach from below, we prove the equivalent statement that for every $\epsilon \in (0,1)$,
    \[
        \limsup_{s \uparrow \infty}\quad \sup_{\| y \| \le (1-\epsilon)s}\frac{T(y)}{s} = \limsup_{m \in \N ,m \uparrow \infty}\quad \sup_{\| y \| \le (1-\epsilon)m}\frac{T(y)}{m} \le \frac{1}{\upvarphi} \quad \p-a.s.,
    \]
    where the first equation holds as $\lfloor s \rfloor/s$ converges to $1$.
    Fix $\epsilon \in (0,1)$  and let $\delta$ be given by Lemma \ref{lm:FPP.seq.conv}. Due to compactness, there exists a finite cover of open balls with centers $(y_i)_{i \in \{1,\dots,n\}} \subseteq \R^d$ with $\|y_i\| \le 1-\epsilon$ such that
    \[
        \overline{B_{1 - \epsilon}(o)} \subseteq \bigcup_{{i \in \{1,\dots,n}\}} B_{\delta\epsilon/(2\upvarphi)}(y_i).
    \]
    Furthermore $B_{m(1-\epsilon)}(o) \subseteq \bigcup_{{i \in \{1,\dots,n}\}} B_ {m\delta\epsilon/(2\upvarphi)}(my_i)$ for every $m \in \N$.
    Applying Lemma $\ref{lm:FPP.seq.conv}$ we obtain
    \[
       \sum_{m \in \N} \p\left( \sup_{\|y- my_i\| \le m \delta\epsilon/(2\upvarphi)} T(m y_i, y) > m \epsilon/(2\upvarphi) \right) < \infty.
    \]
    Therefore, by the Borel--Cantelli lemma,
    \[
      \limsup_{m \in \N, m \uparrow \infty} \quad \sup_{\|my_i-y\| \le m \delta\epsilon/(2\upvarphi)} \frac{T(my_i,y)}{m} <  \frac{\epsilon}{2\upvarphi} \quad \p-a.s.
    \]
    Applying \eqref{eq.FPP.conv.T(nx)} and subadditivity, we obtain
    \begin{align*}
       \limsup_{m \uparrow \infty; \| y \| \le (1-\epsilon)m}\frac{T(y)}{m} &\le 
       \limsup_{m \uparrow \infty} \begin{multlined}[t]\bigg(\max_{i \in \{1,\dots n\}} \frac{T(o,my_i)}{m} \\ \hspace{50pt}+ \sup_{\|my_i-y\| \le m \delta\epsilon/(2\upvarphi)} \frac{T(my_i,y)}{m}\bigg)\end{multlined}\\
       &\le \max_{i \in \{1,\dots, n\}}\|y_i\|/\upvarphi + \epsilon/(2\upvarphi) < 1/\upvarphi \quad \p-a.s.,
    \end{align*}
    where we used that $\|y_i\| < 1-\epsilon$.

    For the approach from above, define  $A_t := B_{t(1+2\epsilon)}(o) \setminus B_{t(1+\epsilon)}(o)$ 
   and observe that it suffices to prove
    \[
        \liminf_{m \in \N, m \uparrow \infty}\quad \inf_{y \in {A}_t}\frac{T(y)}{m} \ge \frac{1}{\upvarphi} \quad \p-a.s.
    \]
    for arbitrary but fixed $\epsilon>0$, as for (for $t > \epsilon$) any $x$ with $\|x\|>t(1+2\epsilon)$ there exists an $\tilde x \in A_t$ with $T(\tilde x) \le T(x)$.
    
    Similar as in the approach from below, fix $\epsilon > 0$ and $\delta > 0$ small enough that Lemma \ref{lm:FPP.seq.conv} holds. There exists a set of centers $(y_i)_{i \in \{1,\dots,n\}} \subseteq \R^d$ with $\|y_i\| \ge 1+\epsilon$ such that
        \[
        A_t \subseteq \bigcup_{{i \in \{1,\dots,n}\}} B_{\delta\epsilon/(2\upvarphi)}(y_i)
    \]
and hence
    \begin{align*}
       \liminf_{m\in \N,m \uparrow \infty}\quad\inf_{y \in {A}_m}\frac{T(y)}{m} &\ge 
        \liminf_{m\in \N,m \uparrow \infty} \begin{multlined}[t]\bigg(\min_{i \in \{1,\dots n\}} \frac{T(o,my_i)}{m} \\ \hspace{40pt} - \sup_{\|my_i-y\| \le m \delta\epsilon/(2\upvarphi)} \frac{T(my_i,y)}{m}\bigg)\end{multlined}\\
       &\ge\min_{i \in \{1,\dots n\}}\|y_i\|/\upvarphi - \epsilon/(2\upvarphi) > 1/\upvarphi,
    \end{align*}
    which concludes the proof of the asymptotic equivalence \eqref{eq:asymptotic.equivalence}. The proof of the theorem is now complete by standard arguments of the $\p$-\textit{a.s.} uniform convergence given by \eqref{eq:asymptotic.equivalence}.
\end{proof}

\section{Proof of Theorem~\ref{thm:rescaling.FPP}}\label{sec:Scale}

Throughout this section, we fix~$r,\lambda,\lambda_{\rm I}$.

We start this section by giving some details on the topologies involved in the statement of Theorem~\ref{thm:rescaling.FPP}, and introducing some notation.

Let~$\mathcal{M}$ denote the space of measures of the form~$\mu = \sum_{i=1}^k \delta_{\{z_i\}}$, where~$k \in \mathbb{N}_0$ and~$z_1,\ldots, z_k \in \mathbb{R}^d$ are distinct. We endow this space with the weak topology, under which a sequence~$\mu_n$ converges to~$\mu$ if and only if~$\int f\;\mathrm{d}\mu_n \xrightarrow{n \to \infty} \int f\;\mathrm{d}\mu$ for all~$f:\mathbb{R}^d \to \mathbb{R}$ that is continuous and bounded. This topology is metrizable; see \cite{prokhorov1956convergence}. 
A sequence~$\mu_n$ converges to~$\mu = \sum_{i=1}^k \delta_{\{z_i\}}$  in this topology if and only if the following two conditions are satisfied: first, for~$n$ large enough, the total masses agree, that is,~$\mu_n(\mathbb{R}^d)=\mu(\mathbb{R}^d) = k$, and second, we can take an enumeration (for~$n$ large enough)~$\mu_n = \sum_{i=1}^k \delta_{\{z_{n,i}\}}$ so that~$z_{n,i} \xrightarrow{n \to \infty} z_i$ for each~$i$.

We let~$D_0$ be the space of all functions~$\gamma:[0,\infty) \to \mathcal{M}$ of the form
\begin{equation}\label{eq_form}
\gamma(t) =  \sum_{k=0}^ \infty \mathds{1}\{s_k\le t\}\delta_{\{z_k\}} ,\quad t \ge 0,
\end{equation}
where~$z_0,z_1,\ldots \in \mathbb{R}^d$ are distinct,~$0=s_0 < s_1 < s_2 < \cdots$, and~$s_n \xrightarrow{n \to \infty} \infty$. For~$\gamma$ of this form, we let
\begin{equation}\label{eq_projs}
\phi_k(\gamma) = z_k,\; k \ge 0 \qquad \text{and}\qquad \psi_k(\gamma) =  s_k - s_{k-1}, \; k \ge 1.
\end{equation}
We endow~$D_0$ with the Skorokhod topology; see \cite{ethier2009markov}, Chapter 3. Note that this gives rise to a topological subspace of the more usual space~$D$ of c\`adl\`ag functions; since the processes we are considering have constant-by-parts trajectories, it is more natural for us to work on~$D_0$ than in~$D$.
 By the definition of the Skorokhod topology, it is easy to see that
\begin{equation}\begin{split}\label{eq_cond_conv}
&\gamma_n \xrightarrow{n \to \infty} \gamma \quad \text{if and only if} \\[.2cm] &\hspace{2cm}\phi_k(\gamma_n)\xrightarrow{n \to \infty} \phi_k \;\forall k \ge 0,\; \psi_k(\gamma_n) \xrightarrow{n \to \infty} \psi_k(\gamma) \; \forall k \ge 1. 
\end{split}\end{equation}
Now let
\[\Lambda(\gamma):= (\phi_0(\gamma),\phi_1(\gamma),\psi_1(\gamma),\phi_2(\gamma),\psi_2(\gamma),\ldots),\quad \gamma \in D_0.\]
Note that~$\Lambda$ is a one-to-one mapping from~$D_0$ to~$\Lambda(D_0) \subset \mathbb{R}^d \times (\mathbb{R}^d \times (0,\infty))^\mathbb{N}$. We endow~$\Lambda(D_0)$ with the product topology (under which convergence means convergence in each coordinate, with respect to the Euclidean topology). With this choice, by~\eqref{eq_cond_conv},~$\Lambda$ is a homeomorphism between~$D_0$ and~$\Lambda(D_0)$. 

We now return to the processes~$(\Hh_{t}^{\alpha \lambda, \lambda_{\rm I}/\alpha})_{t \ge 0}$ and $(\mathcal{T}_t^{\lambda,\lambda_\mathsf{I}})_{t \ge 0}$ and note that, for any $t\ge 0$, $\Hh_{t}^{\alpha \lambda, \lambda_{\rm I}/\alpha}$ can be written as $\sum_{z_i\in \Hh}\mathds{1}\{s_i\le t\}\delta_{\{z_i\}}$ where $s_i$ is the (random) time at which the point $z_i$ is first reached in the FPP model. In particular, by Theorem~\ref{thm:shape.FPP}, $\Hh_{t}^{\alpha \lambda, \lambda_{\rm I}/\alpha}\in\mathcal M$ and the same holds for $\mathcal{T}_t^{\lambda,\lambda_\mathsf{I}}$. 
 Now, write
 \begin{equation*}
 Z_{\alpha,k}:= \phi_k((\mathcal{H}^{\alpha \lambda, \lambda_\mathsf{I}/\alpha}_t)_{t \ge 0}),\quad Z_k := \phi_k((\mathcal{T}_t^{\lambda,\lambda_\mathsf{I}})_{t \ge 0}),\quad k \ge 0
 \end{equation*}
 and note that~$Z_{\alpha,0} = q(o)$ and~$Z_0 = o$. Also write~$T_{\alpha,0} = T_0 := 0$ and
 \begin{equation*}
      T_{\alpha,k}:= \psi_k((\mathcal{H}^{\alpha \lambda, \lambda_\mathsf{I}/\alpha}_t)_{t \ge 0}),\quad T_k := \psi_k((\mathcal{T}_t^{\lambda,\lambda_\mathsf{I}})_{t \ge 0}),\quad k \ge 1.
 \end{equation*}
 With this notation, we can state:
\begin{proposition} \label{prop_prox_kernel}
	Let~$k \in \mathbb{N}$,~$f_0,\ldots, f_k : \mathbb{R}^d \to \mathbb{R}$ and~$g_1,\ldots, g_k:(0,\infty) \to \mathbb{R}$; assume all these functions are continuous with compact support.
 We then have
	\begin{equation}\label{eq_conv_of_exp}
		\E\left[f_0(Z_{\alpha,0}) \prod_{i=1}^k f_i(Z_{\alpha,i})g_i(T_{\alpha,i})\right] \xrightarrow{\alpha \to \infty}f_0(o)\cdot 	\E\left[ \prod_{i=1}^k f_i(Z_{i})g_i(T_{i})\right].
	\end{equation}
\end{proposition}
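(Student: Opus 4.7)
The plan is to compute both sides of \eqref{eq_conv_of_exp} explicitly: conditional on the PPP~$\Poi_{\alpha\lambda}$, the FPP is a continuous-time Markov chain with memoryless exponential clocks, so its $k$-step joint distribution has an explicit density; the multivariate Slivnyak--Mecke formula then turns the resulting sum over $k$-tuples of PPP points into a Lebesgue integral on~$(\R^d)^k$; finally, letting $\alpha \to \infty$ with a Poisson law of large numbers identifies the limit as the density of $(\mathcal{T}^{\lambda,\lambda_{\rm I}}_t)_{t\ge 0}$.

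Concretely, writing $Z_{\alpha,0}=q(o)$ and, for $V \in \Poi_{\alpha\lambda}$ not yet reached at step $i-1$, $n_V^{(i-1)} := |\{j \le i-1 : V \in B_{\radius}(Z_{\alpha,j})\}|$ and $N'_{i-1} := \sum_V n_V^{(i-1)}$, the conditional density of $(Z_{\alpha,i},T_{\alpha,i})$ at $(v,t)$ equals $n_v^{(i-1)}(\lambda_{\rm I}/\alpha)\exp(-N'_{i-1}\lambda_{\rm I} t/\alpha)$. Chaining and unconditioning, the left-hand side of~\eqref{eq_conv_of_exp} equals
\[
\E\bigg[f_0(q(o)) \sum_{v_1,\ldots,v_k} \prod_{i=1}^k f_i(v_i) \int_0^\infty g_i(t_i)\, n_{v_i}^{(i-1)}\, \frac{\lambda_{\rm I}}{\alpha}\, e^{-N'_{i-1} \lambda_{\rm I} t_i/\alpha}\, dt_i \bigg],
\]
with the sum over distinct $(v_i)\in(\Poi_{\alpha\lambda}\setminus\{q(o)\})^k$. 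I would then apply Slivnyak--Mecke to rewrite this as a Lebesgue integral over $\vec v = (v_1,\ldots,v_k)\in(\R^d)^k$, with the PPP replaced by $\Poi_{\alpha\lambda}\cup\{v_1,\ldots,v_k\}$ and an extra factor $(\alpha\lambda)^k$; the $(\lambda_{\rm I}/\alpha)^k$ inside the integrand cancels the $\alpha^k$ exactly, leaving the prefactor $(\lambda\lambda_{\rm I})^k$.

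The second step is to pass $\alpha \to \infty$ under the integral using three ingredients: (a)~$q(o) \to o$ in probability, since the nearest PPP-point to the origin lies within $O(\alpha^{-1/d})$ of it, so $f_0(q(o)) \to f_0(o)$ and the quantities $n_{v_i}^{(i-1)}$ (which depend on the PPP only through $v_0:=q(o)$) converge in probability to their analogues with $v_0 = o$; (b)~a Poisson law of large numbers gives $N'_{i-1}/\alpha \to i\,\lambda\,\upupsilon_d \radius^d$ in probability, so the exponential factor converges pointwise to $\exp(-i\lambda\lambda_{\rm I}\upupsilon_d\radius^d t_i)$; (c)~dominated convergence applies, since the integrand is bounded by $\prod_i \|f_i\|_\infty \|g_i\|_\infty \cdot \prod_i n_{v_i}^{(i-1)}$, the indicator structure of $n_{v_i}^{(i-1)}$ confines $v_i$ to $\bigcup_{j<i}B_{\radius}(v_j)$ (a bounded set once $v_0=o$ is pinned), and each $g_i$ is compactly supported.

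Finally, one recognises the limiting integrand as the density of the branching process: in $(\mathcal{T}^{\lambda,\lambda_{\rm I}}_t)$ the inter-arrival time $T_i$ is $\mathrm{Exp}(i\upupsilon_d \radius^d \lambda\lambda_{\rm I})$ and $Z_i$ has conditional density $(i\upupsilon_d\radius^d)^{-1}\sum_{j<i}\one_{B_{\radius}(Z_j)}$, yielding joint density $\prod_i \lambda\lambda_{\rm I}\, n_{v_i}^{(i-1)}\exp(-i\lambda\lambda_{\rm I}\upupsilon_d\radius^d t_i)$, which matches the limit obtained above after pulling out $f_0(o)$. The step I expect to be the main obstacle is the dominated-convergence argument combined with the mild Palm-calculus bookkeeping: one must verify the Poisson LLN for $N'_{i-1}/\alpha$ with enough uniformity in $\vec v$ to pass the limit through the $t_i$-integration against the continuous $g_i$, and handle the constraint $v_i \ne q(o)$ together with the fact that after Slivnyak--Mecke, $q(o)$ is itself a function of the augmented process $\Poi_{\alpha\lambda}\cup\{v_1,\ldots,v_k\}$ (benign for large $\alpha$ but requiring a short argument).
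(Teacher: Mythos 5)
Your proposal reaches the right conclusion and is, in substance, correct, but it runs along a genuinely different track than the paper's. The starting point is the same: both of you condition on the Poisson process and use the fact that the jump chain is an explicit continuous-time Markov chain, so that the $k$-step expectation can be written as an iterated sum over the PPP against explicit kernels (your pre-Mecke formula is essentially the paper's equation \eqref{eq_long_kernel}). After that the strategies diverge. The paper never invokes Mecke: it proves a single-step kernel-proximity statement, Lemma~\ref{lem_prox_kernel}, showing that with probability $\to 1$ the quenched spatial and temporal kernels $\mathcal{K}_\alpha,\mathcal{L}_\alpha$ are uniformly close (over small $S$ near the origin) to the deterministic limit kernels $\mathcal{K},\mathcal{L}$; this in turn is proved via a discretization event $\mathrm{REG}_\alpha(\ell,\delta)$ that controls empirical Poisson densities in small cubes. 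The proposition then follows by induction on $k$, adding and subtracting the $k$-th limit kernel and using continuity of $S\mapsto[Kf_kg_k](S)$. Your route instead applies the multivariate Mecke formula once, producing a single Lebesgue integral over $(\R^d)^k$ with the $(\alpha\lambda)^k$ factor cancelling the $(\lambda_{\rm I}/\alpha)^k$, then passes $\alpha\to\infty$ through a pointwise Poisson LLN and dominated convergence.

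Both arguments hinge on the same concentration fact (Poisson counts in balls of radius $r$ scale like $\alpha\lambda\upupsilon_dr^d$), but the paper packages this as a uniform kernel approximation followed by a soft inductive step, whereas you package it as a direct DCT after a global Palm-calculus reduction. What the paper's modular route buys is that the delicate LLN only has to be established once, for a fixed set $S$ near the origin, and the iteration is completely painless; what your route buys is a transparent, fully explicit identification of the limiting integrand with the density of the branching process. The technical price you have to pay for the global approach is precisely what you flag: after Mecke the function $q(o)$ and the neighborhood counts $n^{(i-1)}_{v_i}$ are evaluated on the augmented process $\Poi_{\alpha\lambda}\cup\{\vec v\}$, one must rule out (with vanishing contribution) the events that some $v_i$ coincides with $q(o)$ or fails to lie in the infinite cluster, and the Poisson LLN for $N'_{i-1}/\alpha$ has to be verified with enough uniformity in $\vec v$ over the (compact, because the $n^{(i-1)}_{v_i}$ factors pin $\vec v$ to a bounded region once $q(o)\to o$) domain of integration. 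None of these seem to be genuine obstacles, and indeed the needed uniformity is close in spirit to the paper's Lemma~\ref{lem_helpful_kernel}, so the two routes are of comparable difficulty.

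One small imprecision worth fixing: you say $q(o)$ is the nearest PPP-point to the origin, but in the paper's setup $q(o)$ is the nearest point of $V(\Hh)$, the infinite component. The claim $q(o)\to o$ still holds (and is needed), but the one-line justification should invoke the fact that for $\radius>\radius_c(\lambda)$ the set $V(\Hh(\G_{\alpha\lambda,\radius}))$ is, with probability $\to 1$, nonempty inside any fixed small ball around the origin as $\alpha\to\infty$ (cf.\ Lemma~\ref{lm:ball.intersecting.infinite.comp} and the scaling of the RGG), not merely that the nearest Poisson point is at distance $O(\alpha^{-1/d})$.
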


We will prove this proposition later; for now let us show how it implies Theorem~\ref{thm:rescaling.FPP}.
\begin{proof}[Proof of Theorem~\ref{thm:rescaling.FPP}]
Proposition~\ref{prop_prox_kernel} and standard approximation arguments imply that, for all~$k$,
\[(Z_{\alpha,0},Z_{\alpha,1},T_{\alpha,1},\ldots,Z_{\alpha,k},T_{\alpha,k}) \xrightarrow{\alpha \to \infty} (Z_0,Z_1,T_1,\ldots Z_k,T_k)\]
in distribution. This implies that
\begin{equation} \label{eq_conv_distr_v} (Z_{\alpha,0},Z_{\alpha,1},T_{\alpha,1},Z_{\alpha,2},T_{\alpha,2},\ldots) \xrightarrow{\alpha \to \infty} (Z_0,Z_1,T_1,Z_2,T_2,\ldots)\end{equation}
in distribution, because convergence in distribution in the infinite product topology is equivalent to convergence of all finite-dimensional distributions. Given a function~$h: D_0 \to \mathbb{R}$ that is continuous and bounded, we have that
\begin{align*}
&\mathbb{E}[h((\mathcal{H}_t^{\alpha \lambda,\lambda_\mathrm{I}/\alpha})_{t \ge 0})]= \mathbb{E}[h(\Lambda^{-1}(Z_{\alpha,0},Z_{\alpha,1},T_{\alpha,1},Z_{\alpha,2},T_{\alpha,2},\ldots))]\\[.2cm]
&\hspace{2.8cm} \xrightarrow{\alpha \to \infty} \mathbb{E}[h(\Lambda^{-1}(Z_0,Z_1,T_1,Z_2,T_2,\ldots))] = \mathbb{E}[h((\mathcal{T}_t^{\lambda,\lambda_\mathrm{I}})_{t \ge 0})],
\end{align*}
where the convergence follows from~\eqref{eq_conv_distr_v} and the fact that~$h \circ \Lambda^{-1}$ is continuous and bounded. This proves that~$(\mathcal{H}_t^{\alpha \lambda,\lambda_\mathrm{I}/\alpha})_{t \ge 0}$ converges to~$(\mathcal{T}_t^{\lambda,\lambda_\mathrm{I}})_{t \ge 0}$ in distribution.
\end{proof}

In order to prove Proposition~\ref{prop_prox_kernel}, it will be important to have a more explicit description of the expectations that appear in~\eqref{eq_conv_of_exp}. To this end, let us give some definitions. 
Recall that~$\mathcal{P}_{\alpha \lambda}$ denotes a PPP on~$\mathbb{R}^d$ with intensity~$\alpha  \lambda$; we assume that this is the point process that gives rise to the infinite cluster in which the growth process~$(\Hh_{t}^{\alpha \lambda, \lambda_{\rm I}/\alpha})_{t \ge 0}$ is defined.
Given a realization of~$\mathcal{P}_{\alpha \lambda}$ and a finite set~$S \subseteq \mathcal{P}_{\alpha\lambda}$, define
\[\mathcal{N}_\alpha(S):= \sum_{x \in S} \big|(\mathcal{P}_{\alpha\lambda} \cap B_r(x) )\backslash S\big|= \sum_{y \in \mathcal{P}_{\alpha \lambda}\backslash S}\big|\{x \in S:\;\|x-y\|\le r\}\big|.\]
We now introduce {probability kernels} for each value of~$\alpha$; these encode the jump rates of the dynamics of the growth process, conditioned on the realization of~$\mathcal{P}_{\alpha\lambda}$. We start with the temporal kernel
\[\mathcal{L}_\alpha(S,\mathrm{d}t):= \frac{\mathcal{N}_\alpha(S)\lambda_{\rm I}}{\alpha}\cdot \exp\left(-\frac{\mathcal{N}_\alpha(S)\lambda_{\rm I}}{\alpha}\cdot t\right)\mathrm{d}t,\]
where~$S$ is any finite subset of~$\mathcal{P}_{\alpha\lambda}$ and~$\mathcal{L}_\alpha(S,\cdot)$ gives a measure on the Borel sets of~$(0,\infty)$, described above in terms of its density with respect to Lebesgue measure.
Next, define the spatial kernel
\[\mathcal{K}_\alpha(S,A) := \frac{1}{\mathcal{N}_\alpha(S)}\cdot \sum_{y \in \mathcal{P}_{\alpha \lambda}\backslash S}\big|\{x \in S:\;\|x-y\|\le r\}\big|\cdot \delta_{\{y\}}(A).\]
where~$S$ is any finite subset of~$\mathcal{P}_{\alpha\lambda}$ and~$A$ is a Borel subset of~$\mathbb{R}^d$, so that~$\mathcal{K}_\alpha(S,\cdot)$ gives a probability measure on~$\mathbb{R}^d$. Finally, define the kernel
\[K_\alpha(S,\mathrm{d}(x,t)) := \mathcal{K}_\alpha(S,\mathrm{d}x) \otimes \mathcal{L}_\alpha(S,\mathrm{d}t),\]
that is,~$K_\alpha(S,\cdot)$ is the measure on~$\mathbb{R}^d \times (0,\infty)$ that satisfies, for any functions~$f:\mathbb{R}^d \to \mathbb{R}$ and~$g:(0,\infty) \to \mathbb{R}$ both continuous with compact support:
\[\int_{\mathbb{R}^d\times(0,\infty)} f(x)g(t)\;K_\alpha(S,\mathrm{d}(x,t)) = \int_{\mathbb{R}^d} f(x)\mathcal{K}_\alpha(S,\mathrm{d}x) \times \int_{(0,\infty)} g(t)\;\mathcal{L}_\alpha(S,\mathrm{d}t).\]
Now, using the strong Markov property, we can write
\begin{equation}\label{eq_long_kernel}\begin{split}
	&\E\left[f_0(Z_{\alpha,0})\prod_{i=1}^k f_i(Z_{\alpha,i})g_i(T_{\alpha,i})\right]  \\
	&=\E\left[f_0(q(o))  \int K_\alpha(\{q(o)\},\mathrm{d}(z_1,t_1)) f_1(z_1)g_1(t_1)\right.\\
	&\hspace{2.3cm}\int K_\alpha(\{q(o),z_1\},\mathrm{d}(z_2,t_2))f_2(z_2)g_2(t_2)\cdots \\
&\hspace{2.3cm}\left.\int K_\alpha(\{q(o),z_1,\ldots,z_{k-1}\},\mathrm{d}(z_k,t_k)) f_k(z_k)g_k(t_k)\right].\end{split}
\end{equation}

We now define the analogous kernels for the limiting growth process. The temporal kernel is given by
\[\mathcal{L}(S,\mathrm{d}t) := |S|\upupsilon_dr^d\lambda \lambda_{\rm I} \cdot\exp\big(- |S|\upupsilon_dr^d\lambda \lambda_{\rm I} \cdot t\big)\; \mathrm{d}t,\]
where~$S \subseteq \mathbb{R}^d$ is finite, and again we obtain a measure on~$(0,\infty)$. Next, the spatial kernel is given by
\[\mathcal{K}(S,\mathrm{d}y) := \left( \frac{1}{|S|\upupsilon_dr^d}\sum_{x \in S} \mathds{1}\{y \in B_r(x)\}\right)\mathrm{d}y,\]
for~$S \subseteq \mathbb{R}^d$ finite. So~$\mathcal{K}(S,\cdot)$ is the probability measure on~$\mathbb{R}^d$ obtained from first choosing~$x \in S$ uniformly at random, and then choosing a point~$y \in B_r(x)$ uniformly at random. We again let~$K(S,\mathrm{d}(x,t)):= \mathcal{K}(S,\mathrm{d}x)\otimes \mathcal{L}(S,\mathrm{d}t)$. Again by the Markov property, the equality in~\eqref{eq_long_kernel} holds for the limiting process with respect to this kernel (that is, the same equality with~$q(o)$ replaced by~$o$ and all~$\alpha$'s omitted).

The following will be the essential ingredient in the proof of Proposition~\ref{prop_prox_kernel}.

\begin{lemma}\label{lem_prox_kernel}
\label{lem_prox_kernel_L}
	Let~$f:\mathbb{R}^d \to \mathbb{R}$ and~$g:(0,\infty) \to \mathbb{R}$ be both continuous  with compact support,~$k\in \mathbb{N}$ and~$\varepsilon > 0$. There exists~$\alpha_0 > 0$ such that for any~$\alpha \ge \alpha_0$, we have that with probability larger than~$1-\varepsilon$,~$\mathcal{P}_{\alpha\lambda}$ satisfies the following. For any set~$S \subseteq B_{kr}(o) \cap \mathcal{P}_{\alpha\lambda}$ with~$|S|\le k$, we have
	\[\left|\int_{\mathbb{R}^d}f(x) g(t)\;K_\alpha(S,\mathrm{d}(x,t))-\int_{\mathbb{R}^d}f(x) g(t)\;K(S,\mathrm{d}(x,t))\right| < \varepsilon.\]
\end{lemma}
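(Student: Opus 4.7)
The overarching strategy is to transfer the problem to a uniform law-of-large-numbers estimate for the Poisson process $\mathcal{P}_{\alpha\lambda}$ over the compact family of balls $\{B_r(x) : x \in B_{kr}(o)\}$, and then exploit the explicit dependence of the kernels on counts and sums over such balls.

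First I would fix a small $\eta > 0$, to be calibrated at the end in terms of $\varepsilon$, $k$, $\|f\|_\infty$, and $\|g\|_\infty$, and prove that with probability at least $1 - \varepsilon$, for every $x \in B_{kr}(o)$ and all sufficiently large $\alpha$,
\[
\biggl|\tfrac{1}{\alpha}|\mathcal{P}_{\alpha\lambda} \cap B_r(x)| - \lambda \upupsilon_d r^d\biggr| < \eta \quad \text{and} \quad \biggl|\tfrac{1}{\alpha}\sum_{y \in \mathcal{P}_{\alpha\lambda} \cap B_r(x)} f(y) - \lambda\int_{B_r(x)} f(y)\,\mathrm{d}y\biggr| < \eta.
\]
This uniform estimate is obtained by taking a finite $\eta'$-net $N \subseteq B_{kr}(o)$; at each net point, applying Chernoff bounds to the Poisson count and a Chebyshev-type bound to the $f$-sum (whose variance is of order $\alpha$), with a union bound over $|N|$; and then passing from $N$ to arbitrary $x$ via uniform continuity of $f$ together with the elementary estimate $|B_r(x) \triangle B_r(x')| = O(\|x-x'\|)$, where $\eta'$ is chosen so the extension errors fit inside $\eta$.

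On this good event I would then treat arbitrary $S \subseteq B_{kr}(o) \cap \mathcal{P}_{\alpha\lambda}$ with $|S| \leq k$. Writing $\mathcal{N}_\alpha(S)/\alpha = \alpha^{-1}\sum_{x \in S}\bigl(|\mathcal{P}_{\alpha\lambda}\cap B_r(x)| - |S \cap B_r(x)|\bigr)$, the first term is within $k\eta$ of $|S|\lambda\upupsilon_d r^d$ and the second is at most $k^2/\alpha$. Hence the rate of $\mathcal{L}_\alpha(S,\cdot)$ differs from the limiting rate $|S|\lambda\upupsilon_d r^d \lambda_{\rm I}$ by $O(k\eta)$; since $\rho \mapsto \int g(t)\, \rho\me^{-\rho t}\,\mathrm{d}t$ is Lipschitz on intervals bounded away from $0$ (with constant controlled by $\|g\|_\infty$ and the support of $g$), we obtain $\bigl|\int g\,\mathrm{d}\mathcal{L}_\alpha(S,\cdot) - \int g\,\mathrm{d}\mathcal{L}(S,\cdot)\bigr| = O(\eta)$, uniformly in $S$. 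Similarly, expanding
\[
\int f\,\mathrm{d}\mathcal{K}_\alpha(S,\cdot) = \frac{1}{\mathcal{N}_\alpha(S)}\sum_{x \in S}\biggl(\sum_{y \in \mathcal{P}_{\alpha\lambda}\cap B_r(x)} f(y) - \sum_{y \in S \cap B_r(x)} f(y)\biggr),
\]
applying the uniform bound to the inner $f$-sum (the $S$-correction is $O(k\|f\|_\infty)$ before dividing by $\alpha$) and using the uniform estimate on $\mathcal{N}_\alpha(S)$, gives convergence to $(|S|\upupsilon_d r^d)^{-1}\sum_{x \in S}\int_{B_r(x)} f(y)\,\mathrm{d}y = \int f\,\mathrm{d}\mathcal{K}(S,\cdot)$ with uniform error $O(\eta)$.

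Finally, combining the spatial and temporal comparisons via the product structure $K_\alpha = \mathcal{K}_\alpha \otimes \mathcal{L}_\alpha$ and $K = \mathcal{K} \otimes \mathcal{L}$, and using $\|f\|_\infty,\|g\|_\infty < \infty$, the total discrepancy is $O(\eta)$; choosing $\eta$ small enough relative to $\varepsilon$ finishes the proof. The main obstacle is uniformity in $S$, since $S$ is random, lies inside a random set, and can have its points clustered. This is resolved by the observation that $\mathcal{N}_\alpha$, $\mathcal{K}_\alpha$, $\mathcal{L}_\alpha$ depend on $S$ only through counts and sums against the indicators of $B_r(x)$, $x \in S \subseteq B_{kr}(o)$, so a single uniform LLN over the compact class $\{B_r(x) : x \in B_{kr}(o)\}$ suffices, and the constraint $|S| \leq k$ absorbs all multiplicity-related errors into constants depending on $k$ alone.
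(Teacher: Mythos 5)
Your proposal follows the same overarching strategy as the paper: establish a uniform law-of-large-numbers type concentration for $\mathcal{P}_{\alpha\lambda}$ over the compact family $\{B_r(x): x \in B_{kr}(o)\}$, observe that the kernels $\mathcal{K}_\alpha$, $\mathcal{L}_\alpha$ depend on $S$ only through counts and $f$-sums over such balls (so that $|S|\le k$ absorbs the multiplicity), and then treat the spatial and temporal parts separately before recombining via the product structure. The implementation differs: the paper discretizes space into a cube partition $\mathcal{C}_\delta$ and works on a single regularity event $\mathrm{REG}_\alpha(\ell,\delta)$ controlling all cube counts simultaneously, passing from cube sums to integrals (Lemma~\ref{lem_helpful_kernel}), whereas you take a finite net of ball centers and apply Chernoff/Chebyshev at each net point. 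Both routes are legitimate, and you also bypass the paper's intermediate total-variation-over-cubes bound by comparing $\int f\,\mathrm{d}\mathcal{K}_\alpha$ and $\int f\,\mathrm{d}\mathcal{K}$ directly, which is slightly more direct.

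There is, however, one step in your sketch that as written would not go through. When you extend from the net $N$ to arbitrary $x\in B_{kr}(o)$, you invoke the estimate $|B_r(x)\triangle B_r(x')| = O(\|x-x'\|)$, but this bounds only the Lebesgue measure of the symmetric difference, not the Poisson count $|\mathcal{P}_{\alpha\lambda}\cap(B_r(x)\triangle B_r(x'))|$, which is what actually enters the discrepancy for counts and $f$-sums. Since you cannot union-bound over the uncountable family of such symmetric differences, the deterministic Lebesgue estimate alone does not control the random count. The standard fix is a monotonicity/sandwich argument: for $x$ within $\eta'$ of a net point $x'$, use $B_{r-\eta'}(x') \subseteq B_r(x) \subseteq B_{r+\eta'}(x')$, include the balls $B_{r\pm\eta'}(x')$, $x'\in N$, in your union bound of concentration events, and note that on the good event the difference in counts between the outer and inner balls at $x'$ is $O(\alpha\eta')$; the $f$-sum is then handled via $\|f\|_\infty$ times this count bound. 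Once this is inserted, the rest of the argument (the $O(k^2/\alpha)$ correction from subtracting $S$, the Lipschitz estimate for the exponential rate, and the recombination via $K_\alpha = \mathcal{K}_\alpha\otimes\mathcal{L}_\alpha$) is sound.
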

We postpone the proof of this lemma; it will immediately follow from Lemma~\ref{lem_prox_kernel_K} and Lemma~\ref{lem_prox_kernel_LL} below, which separately treat the spatial and temporal kernels. For now, let us show how Lemma~\ref{lem_prox_kernel}  implies Proposition~\ref{prop_prox_kernel}. 
\begin{proof}[Proof of Proposition~\ref{prop_prox_kernel}]
	We will abbreviate
	\[[K_\alpha fg](S):= \int_{\mathbb{R}^d \times (0,\infty)} f(x)g(t)K_\alpha(S,\mathrm{d}(x,t)),\]
	and also
	\[S_{\alpha,k}:= \{Z_{\alpha,0},Z_{\alpha,1},\ldots, Z_{\alpha,k}\},\quad k \in \mathbb{N}_0,\]
	and similarly when~$\alpha$ is absent.

	We proceed by induction on~$k\in \mathbb{N}_0$. We interpret the case~$k= 0$ to mean that
	\begin{equation*}
		\E[f_0(Z_{\alpha,0})] \xrightarrow{\alpha \to \infty} f_0(0),
	\end{equation*}
	which holds because~$Z_{\alpha,0}=q(o)$ converges in probability to~$o$ as~$\alpha \to \infty$, as is easily seen. Now assume that the statement has been proved for~$k-1 \ge 0$, and take functions $f_1,\ldots, f_{k},g_1,\ldots, g_{k}$ as in the statement. It will be convenient to add and subtract as follows (for~$k=1$, we interpret~`$\prod_{i=1}^0$' as being equal to one): 
	\begin{align*}
		&\E\left[f_0(Z_{\alpha,0}) \prod_{i=1}^{k-1} f_i(Z_{\alpha,i})g_i(T_{\alpha,i})\right]\\
		&= \E\left[ f_0(Z_{\alpha,0})\prod_{i=1}^{k-1} f_i(Z_{\alpha,i})g_i(T_{\alpha,i})\cdot [K_\alpha f_k g_k](S_{\alpha,k})\right]\\
		& =\E\left[ \prod_{i=1}^{k-1} f_i(Z_{\alpha,i})g_i(T_{\alpha,i})\cdot \big([K_\alpha f_{k}g_{k}](S_{\alpha,k})\pm [K f_{k}g_{k}](S_{\alpha,k})\big)\right].
	\end{align*}
	Noting that the function that maps~$(z_0,z_1,t_1,\ldots, z_{k-1},t_{k-1})$ into
	\[f_0(z_0)\prod_{i=1}^{k-1}f_i(z_i)g_i(t_i)\cdot [Kf_kg_k](\{z_0,z_1,\ldots,z_{k-1}\}) \]
	is continuous, the induction hypothesis and the definition of weak convergence give
	\begin{align*}
		&\E\left[f_0(Z_{\alpha,0}) \prod_{i=1}^{k-1} f_i(Z_{\alpha,i})g_i(T_{\alpha,i}) [K f_k g_k](S_{\alpha,k})\right] \\
		&\xrightarrow{\alpha \to \infty}\E\left[f_0(Z_0) \prod_{i=1}^{k-1} f_i(Z_{i})g_i(T_{i}) [K f_k g_k](S_{k})\right] = \E\left[f_0(Z_0) \prod_{i=1}^k f_i(Z_i)g_i(T_i)\right].
	\end{align*}
	Next, we bound
	\begin{align*}
		&\E\left[f_0(Z_{\alpha,0}) \prod_{i=1}^{k-1} f_i(Z_{\alpha,i})g_i(T_{\alpha,i})\cdot |[K_\alpha f_{k}g_{k}](S_{\alpha,k})- [K f_{k}g_{k}](S_{\alpha,k})|\right]\\
		&\le \left( \max_{i \le k-1} (\|f_i\|_\infty\vee \|g_i\|_\infty)\right)^{k}\cdot \E\left[ |[K_\alpha f_{k}g_{k}](S_{\alpha,k})- [K f_{k}g_{k}](S_{\alpha,k})|\right].
	\end{align*}
	By Lemma~\ref{lem_prox_kernel}, the right-hand side converges to zero as~$\alpha \to \infty$. This completes the proof.
\end{proof}

It remains to prove Lemma~\ref{lem_prox_kernel}. To do so, let us now introduce some more notation.
For each~$\delta > 0$, we define the collection of cubes
\[\mathcal{C}_\delta := \left\{\delta z + [-\tfrac{\delta}{2},\tfrac{\delta}{2})^d:\;z \in \mathbb{Z}^d\right\}.\]
 Additionally, given~$\alpha > 0$,~$\ell \in \mathbb{N}$ and~$\delta > 0$, we define the event (involving the set~$\mathcal{P}_{\alpha\lambda}$, but not the passage times):
\[\mathrm{REG}_\alpha(\ell,\delta):= \left\{\left| \frac{|\mathcal{P}_{\alpha\lambda} \cap Q|}{\alpha \lambda\delta^d} - 1\right| < \delta \text{ for all } Q \in \mathcal{C}_\delta \text{ with } Q \subseteq B_\ell(o)\right\}.\]
By the law of large numbers we have
\begin{equation}\label{eq_lim_of_reg}\lim_{\alpha \to \infty} \p\big(\mathrm{REG}_\alpha(\ell,\delta)\big) = 1.\end{equation}

\begin{lemma}\label{lem_helpful_kernel}
	For any~$\varepsilon > 0$ and~$k \in \mathbb{N}$ there exists~$\delta_1 = \delta_1(\varepsilon,k)$ such that the following holds for~$\delta \in (0,\delta_1]$. For any~$\ell \ge r$, if~$\alpha$ is large enough and the event~$\mathrm{REG}_\alpha(\ell+1,\delta)$ occurs, then for any~$S \subseteq\mathcal{P}_{\alpha\lambda}\cap B_{\ell-r}(o) $ with~$|S| \le k$ we have
	\begin{equation}\label{eq_big_abs}	
		\sum_{x \in S}\sum_{Q\in \mathcal{C}_\delta} \left| \frac{\big|(\mathcal{P}_{\alpha\lambda}\cap Q \cap B_r(x) )\backslash S\big|}{\alpha \lambda} - \int_Q \mathds{1}\{\|y-x\| \le r\}\;\mathrm{d}y\right| < \varepsilon.
	\end{equation}
\end{lemma}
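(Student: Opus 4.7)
The plan is to exploit the fact that only cubes $Q \in \mathcal{C}_\delta$ that meet $B_r(x)$ contribute to the summand (outside that ball both terms vanish), and to split those cubes into three families relative to $\partial B_r(x)$: those entirely contained in $B_r(x)$ (call this set $\mathcal{I}_x$), those disjoint from $\overline{B_r(x)}$ (contribute $0$), and those crossing the boundary (call this set $\partial_x$). For $x \in S \subseteq B_{\ell-r}(o)$, every cube that touches $B_r(x)$ is contained in $B_\ell(o) \subseteq B_{\ell+1}(o)$ (for $\delta \le 1$), so the regularity event $\mathrm{REG}_\alpha(\ell+1,\delta)$ applies to all such cubes.

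For $Q \in \mathcal{I}_x$ one has $|\mathcal{P}_{\alpha\lambda}\cap Q \cap B_r(x)| = |\mathcal{P}_{\alpha\lambda}\cap Q|$ and $\int_Q \mathds{1}\{\|y-x\|\le r\}\,\mathrm{d}y = \delta^d$, so the integrand in~\eqref{eq_big_abs} is bounded, using $\mathrm{REG}_\alpha(\ell+1,\delta)$, by $\delta \cdot \delta^d$, plus an error $\le k/(\alpha\lambda)$ coming from the removal of $S$. Since $|\mathcal{I}_x| \le \upupsilon_d r^d \delta^{-d}$, the total interior contribution for a single $x$ is at most $\upupsilon_d r^d\, \delta + k\upupsilon_d r^d \delta^{-d}/(\alpha\lambda)$. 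For $Q \in \partial_x$ the integral is at most $\delta^d$ and $\mathrm{REG}_\alpha(\ell+1,\delta)$ gives $|\mathcal{P}_{\alpha\lambda}\cap Q|/(\alpha\lambda)\le (1+\delta)\delta^d$, so each such cube contributes at most $(2+\delta)\delta^d + k/(\alpha\lambda)$. A standard geometric estimate bounds $|\partial_x|$ by $C_d\, r^{d-1}\delta^{-(d-1)}$, giving a boundary contribution of order $C_d r^{d-1}\delta$ plus an $\alpha^{-1}$ term.

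Summing over $x\in S$ (at most $k$ terms) and absorbing constants, the left-hand side of~\eqref{eq_big_abs} is bounded by
\[
k\bigl(C'_d r^d + C_d r^{d-1}\bigr)\delta \;+\; \frac{k^2\bigl(\upupsilon_d r^d \delta^{-d} + C_d r^{d-1}\delta^{-(d-1)}\bigr)}{\alpha\lambda}.
\]
Choose $\delta_1 = \delta_1(\varepsilon,k)$ so that the first term is smaller than $\varepsilon/2$ for all $\delta\in(0,\delta_1]$; then, since $\ell$ and $\delta$ are fixed at this stage, take $\alpha$ large enough that the second term is smaller than $\varepsilon/2$. Note the order of quantification matches the statement: $\delta_1$ is chosen independently of $\ell$, while the threshold on $\alpha$ is allowed to depend on both $\ell$ and $\delta$.

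The main obstacle is the control of the boundary cubes in $\partial_x$. The heuristic $|\partial_x| = O(r^{d-1}\delta^{-(d-1)})$ is the standard surface-area bound, but it needs a clean justification: any cube of $\mathcal{C}_\delta$ meeting $\partial B_r(x)$ is contained in the annulus $\{y:|\,\|y-x\|-r\,|\le \sqrt{d}\,\delta\}$, whose volume is $O(r^{d-1}\delta)$ for $\delta$ small, so the number of such cubes is $O(r^{d-1}\delta^{1-d})$. All other estimates are pointwise consequences of $\mathrm{REG}_\alpha(\ell+1,\delta)$ and the trivial bound $|S|\le k$, so once the boundary count is in place the proof is essentially an $\varepsilon$/$\delta$ bookkeeping exercise.
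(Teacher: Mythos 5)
Your proof is correct and follows essentially the same approach as the paper's: split the cubes meeting $B_r(x)$ into those interior to the ball and those crossing $\partial B_r(x)$, bound each summand via $\mathrm{REG}_\alpha(\ell+1,\delta)$ together with the $|S|\le k$ correction, and use a surface-area count for the boundary cubes. The only difference is cosmetic --- you spell out the annulus containment giving $|\partial_x| = O(r^{d-1}\delta^{1-d})$, where the paper folds the same estimate into the constant $\delta'$ with a ``not hard to see'' remark --- so the two arguments are really the same.
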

\begin{proof}
It is not hard to see that we can choose~$\delta' > 0$ so that for any~$\delta < \delta'$ we have
	\begin{equation}\label{eq_boundary_cubes}
		\delta^d\sup_{x \in \mathbb{R}^d} \sum_{Q \in \mathcal{C}_\delta} \mathds{1}\{Q \cap \partial B_r(x) \neq \varnothing\}< \frac{\varepsilon}{3k}, 
	\end{equation}
	where~$\partial B_r(x):= \{y \in \mathbb{R}^d:\|x-y\|= r\}$. Next, we let~$\delta_1:= \min\left(\delta',\; \frac{\varepsilon}{2\upupsilon_dr^d}\right)$.

	Now, assume that~$\delta < \delta_1$ and that~$\mathrm{REG}_\alpha(\ell+1,\delta)$ occurs. Fix~$S \subseteq \mathcal{P}_{\alpha \lambda} \cap B_{\ell-r}(o)$ with~$|S| \le k$, and also fix~$x \in S$. For each~$Q \in \mathcal{C}_\delta$ define
	\[ \mathscr{E}_x(Q):= \left| \frac{|(\mathcal{P}_{\alpha\lambda}\cap Q \cap B_r(x) )\backslash S|}{\alpha \lambda} - \int_Q \mathds{1}\{\|y-x\| \le r\}\;\mathrm{d}y\right|. \]
	If~$Q \cap \partial B_r(x) \neq \varnothing$ we bound
	\begin{align*}
		\mathscr{E}_x(Q)\le \frac{|\mathcal{P}_{\alpha\lambda} \cap Q|}{\alpha \lambda} + \delta^d \le (1+\delta)\delta^d + \delta^d =(2+\delta)\delta^d,
	\end{align*}
	by the triangle inequality and the definition of~$\mathrm{REG}_\alpha(\ell,\delta)$. If~$Q \subseteq B_r(x)$ with~$Q \cap \partial B_r(x) = \varnothing$ we have
	\[\frac{|\mathcal{P}_{\alpha\lambda} \cap Q \cap B_r(x)|}{\alpha \lambda} = \frac{|\mathcal{P}_{\alpha\lambda} \cap Q|}{\alpha \lambda}\in ((1-\delta)\delta^d,(1+\delta)\delta^d), \]
	so we bound
	\[\mathscr{E}_x(Q) \le \frac{k}{\alpha\lambda}+\delta\cdot \delta^d \le 2\delta^{d+1}\]
	(the factor~$\tfrac{k}{\alpha \lambda}$ is there to account for the possibility that~$Q$ contains some points of~$S$; the second inequality holds if~$\alpha$ is large enough that~$k/(\alpha \lambda) < \delta^{d+1}$).
	Now, also using~\eqref{eq_boundary_cubes}, we have that the left-hand side of~\eqref{eq_big_abs} is at most
	\begin{align*} \sum_{x \in S} \sum_{Q \in \mathcal{C}_\delta} \mathscr{E}_x(Q)&\le 
		\sum_{x \in S} \left((2+\delta)\delta^d  \sum_{Q \in \mathcal{C}_\delta} \mathds{1}\{Q \cap \partial B_r(x) \neq \varnothing\}+ 2\delta^{d+1} \frac{\upupsilon_d r^d}{\delta^d}\right)\\
		&\le k\cdot \left( \frac{(2+\delta)\varepsilon}{3k} + 2\delta^{d+1} \frac{\upupsilon_d r^d}{\delta^d}\right).
	\end{align*}
	If~$\delta$ is small enough, the right-hand  side is smaller than~$\varepsilon$, completing the  proof.
\end{proof}

\begin{lemma}\label{lem_prox_kernel_K}
	Let~$f:\mathbb{R}^d \to \mathbb{R}$ be continuous with compact support,~$k\in \mathbb{N}$ and~$\varepsilon > 0$. There exists~$\alpha_0 > 0$ such that for any~$\alpha \ge \alpha_0$, we have that with probability larger than~$1-\varepsilon$,~$\mathcal{P}_{\alpha\lambda}$ satisfies the following. For any set~$S \subseteq B_{kr}(o) \cap \mathcal{P}_{\alpha\lambda}$ with~$|S|\le k$, we have
	\[\left|\int_{\mathbb{R}^d} f(x)\;\mathcal{K}_\alpha(S,\mathrm{d}x)- \int_{\mathbb{R}^d}f(x)\;\mathcal{K}(S,\mathrm{d}x)\right| < \varepsilon.\]
\end{lemma}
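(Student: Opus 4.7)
The strategy is to express both measures as explicit sums, reduce the comparison to the Riemann-sum type estimate provided by Lemma~\ref{lem_helpful_kernel}, and then handle the ratio via an elementary quotient inequality. Using the identity $\mathcal{N}_\alpha(S) = \sum_{x \in S}|(\mathcal{P}_{\alpha\lambda}\cap B_r(x))\setminus S|$, one writes
\begin{equation*}
\int_{\R^d} f(x)\,\mathcal{K}_\alpha(S,\mathrm{d}x) = \frac{A_\alpha(S,f)}{B_\alpha(S)},\qquad \int_{\R^d} f(x)\,\mathcal{K}(S,\mathrm{d}x) = \frac{A(S,f)}{B(S)},
\end{equation*}
with $A_\alpha(S,f)=(\alpha\lambda)^{-1}\sum_{x \in S}\sum_{y \in (\mathcal{P}_{\alpha\lambda}\cap B_r(x))\setminus S} f(y)$, $B_\alpha(S)=(\alpha\lambda)^{-1}\mathcal{N}_\alpha(S)$, $A(S,f)=\sum_{x \in S}\int_{B_r(x)}f(y)\,\mathrm{d}y$ and $B(S)=|S|\upupsilon_d r^d$. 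The elementary bound $|A_\alpha/B_\alpha - A/B|\le |A_\alpha-A|/B_\alpha + |A|\,|B_\alpha-B|/(B_\alpha B)$ reduces the task to showing that, on an event of probability at least $1-\varepsilon$, both $|A_\alpha(S,f)-A(S,f)|$ and $|B_\alpha(S)-B(S)|$ are small, uniformly over admissible $S$ (which are implicitly nonempty, so that $\mathcal{K}_\alpha(S,\cdot)$ is well defined).

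The denominator is handled immediately: fix $\ell\ge (k+1)r$ large enough that $\mathrm{supp}(f) \subseteq B_{\ell-r}(o)$. On the event $\mathrm{REG}_\alpha(\ell+1,\delta)$, Lemma~\ref{lem_helpful_kernel} applied without any weighting yields $|B_\alpha(S)-B(S)|<\varepsilon_1$, which also provides a uniform lower bound $B_\alpha(S)\ge \upupsilon_d r^d -\varepsilon_1$. For the numerator, I would approximate $f$ uniformly by a step function $\widetilde f = \sum_{Q\in\mathcal{C}_\delta} f_Q\,\mathds{1}_Q$ constant on each cube of $\mathcal{C}_\delta$; by uniform continuity and compact support of $f$, one may arrange $\|f-\widetilde f\|_\infty<\varepsilon_2$ by shrinking $\delta$. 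Multiplying the inequality in Lemma~\ref{lem_helpful_kernel} by $\max_Q|f_Q|\le \|f\|_\infty$ cube-by-cube then gives $|A_\alpha(S,\widetilde f)-A(S,\widetilde f)|\le \|f\|_\infty\cdot\varepsilon_3$, while the replacement error between $f$ and $\widetilde f$ contributes at most $\|f-\widetilde f\|_\infty\cdot(B_\alpha(S)+B(S))\le 2\varepsilon_2 k\upupsilon_d r^d$.

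Finally, by~\eqref{eq_lim_of_reg} the event $\mathrm{REG}_\alpha(\ell+1,\delta)$ has probability exceeding $1-\varepsilon$ for all sufficiently large $\alpha$. Combining the numerator and denominator estimates and choosing $\delta,\varepsilon_1,\varepsilon_2,\varepsilon_3$ small relative to $\varepsilon$, $k$, $r$, and $\|f\|_\infty$, one obtains the desired bound simultaneously over all admissible $S$. The main technical point requiring care is that the whole estimate must be uniform over $S\subseteq \mathcal{P}_{\alpha\lambda}\cap B_{kr}(o)$ with $|S|\le k$; this uniformity, however, is already built into Lemma~\ref{lem_helpful_kernel}, so no fundamentally new probabilistic input is needed beyond~\eqref{eq_lim_of_reg} and the continuity of $f$.
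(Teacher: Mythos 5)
Your plan is correct and relies on the same ingredients as the paper: Lemma~\ref{lem_helpful_kernel}, the regularity event $\mathrm{REG}_\alpha$, and the continuity and compact support of $f$. The organization differs slightly: you split $A_\alpha/B_\alpha - A/B$ algebraically and control the numerator (via a step-function approximation of $f$ on the cube partition $\mathcal{C}_\delta$) and the denominator separately, correctly noting the need for a uniform lower bound $B_\alpha \ge \upupsilon_d r^d - \varepsilon_1$ on the denominator. The paper instead first establishes the single bound $\sum_{Q\in\mathcal{C}_{\delta_1}}|\mathcal{K}_\alpha(S,Q) - \mathcal{K}(S,Q)| < \delta_0$ and then invokes a preliminary observation that two probability measures close in this cube-wise total-variation sense must have close integrals against a fixed continuous compactly supported $f$. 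Your step-function approximation of $f$ is exactly what underlies that preliminary observation, and your separate estimate $|B_\alpha - B|<\varepsilon_1$ corresponds precisely to the paper's intermediate bound $|\mathcal{N}_\alpha(S)/(\alpha\lambda) - |S|\upupsilon_d r^d| \le \varepsilon'$. The mathematical content is therefore the same; the paper's normalization-first phrasing is marginally more compact because the ratio manipulation is absorbed into a clean triangle-inequality step on the kernels, while yours makes the finite-$\alpha$ error and the quotient estimate fully explicit.
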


\begin{proof}
	Fix~$f$,~$k$ and~$\varepsilon$ as in the statement of the lemma.  We choose constants as follows:
	\begin{itemize}
		\item since~$f$ is continuous with compact support, it is easy to see that we can choose~$\delta_0$ small enough that for any~$\delta \in (0,\delta_0)$ we have
			\[\sup_{\mu',\mu''} \left| \int_{\mathbb{R}^d} f\mathrm{d}\mu' - \int_{\mathbb{R}^d} f \mathrm{d}\mu'' \right| < \varepsilon,\]
			where the supremum is taken over all pairs of probability measures~$\mu',\mu''$ on Borel sets of~$\mathbb{R}^d$ with 
			\begin{equation}\label{eq_cond_mu}\sum_{Q \in \mathcal{C}_\delta}|\mu'(Q)-\mu''(Q)| < \delta.\end{equation}
			\item next, let~$\varepsilon' := \frac{\delta_0 \upupsilon_d r^d}{2}$, and choose~$\delta_1 = \delta_1(\varepsilon',k)$ as in Lemma~\ref{lem_helpful_kernel}.
	\end{itemize}  

	Letting~$\ell$ be large enough that the support of~$f$ is contained in~$B_\ell(o)$, assume that the event~$\mathrm{REG}_{\alpha}(\ell+rk+1,\delta_1)$ occurs, and let~$S \subseteq B_{rk}(o) \cap \mathcal{P}_{\alpha\lambda}$ be a set with at most~$k$ points. Using the triangle inequality and Lemma~\ref{lem_helpful_kernel}, we bound
	\begin{equation}\label{eq_main_tri_bound}\begin{split}
		&\sum_{Q \in \mathcal{C}_{\delta_1}} \left|\frac{\mathcal{N}_{\alpha}(S)}{\alpha\lambda} \cdot \mathcal{K}_{\alpha}(S,Q)- |S|\upupsilon_d r^d \cdot \mathcal{K}(S,Q)\right|\\
	&\le \sum_{x \in S}\sum_{Q \in \mathcal{C}_{\delta_1}} \left| \frac{|(\mathcal{P}_{\alpha\lambda}\cap Q \cap B_r(x) )\backslash S|}{\alpha\lambda} - \int_Q \mathds{1}\{\|y-x\| \le r\}\;\mathrm{d}y\right| \le  \varepsilon'.
	\end{split}\end{equation}
	Using the fact that~$\sum_Q \mathcal{K}_\alpha(S,Q) = \sum_Q \mathcal{K}(S,Q)=1$, this readily gives
	\begin{equation}\label{eq_sec_tri_bound}\begin{split}
		\left|\frac{\mathcal{N}_{\alpha}(S)}{\alpha\lambda} - |S|\upupsilon_dr^d\right|  
		&= \left| \frac{\mathcal{N}_\alpha(S)}{\alpha \lambda}\sum_{Q \in \mathcal{C}_\delta} \mathcal{K}_\alpha(S,Q) - |S|\upupsilon_dr^d \sum_{Q \in \mathcal{C}_\delta} \mathcal{K}(S,Q)\right|\\
		&\le\sum_{Q \in \mathcal{C}_{\delta_1}} \left|\frac{\mathcal{N}_{\alpha}(S)}{\alpha\lambda} \cdot \mathcal{K}_{\alpha}(S,Q)- |S|\upupsilon_d r^d \cdot \mathcal{K}(S,Q)\right| \le  \varepsilon'.
	\end{split}\end{equation}
	Next, the triangle inequality gives, for any~$Q \in \mathcal{C}_\delta$,
\begin{align*}
	|\mathcal{K}_{\alpha}(S,Q) - \mathcal{K}(S,Q)|\le\frac{1}{|S|\upupsilon_dr^d}&\left(\left|  |S|\upupsilon_dr^d- \frac{\mathcal{N}_{\alpha}(S)}{\alpha\lambda}\right|\cdot \mathcal{K}_{\alpha}(S,Q)\right.\\[.2cm]&\hspace{.1cm}
	\left.+ \left| \frac{\mathcal{N}_{\alpha}(S)}{\alpha\lambda} \cdot \mathcal{K}_{\alpha}(S,Q) - |S|\upupsilon_dr^d \cdot \mathcal{K}(S,Q)\right|  \right).
\end{align*}
	Combining this with~\eqref{eq_main_tri_bound} and~\eqref{eq_sec_tri_bound}, we obtain
\begin{align*}
	\sum_{Q \in \mathcal{C}_{\delta_1}} |\mathcal{K}_{\alpha}(S,Q) - \mathcal{K}(S,Q)| \le\frac{\varepsilon'}{|S|\upupsilon_dr^d}\left(  \sum_{Q \in \mathcal{C}_{\delta_1}} \mathcal{K}_{\alpha}(S,Q) + 1\right)\le \frac{2\varepsilon'}{\upupsilon_dr^d} = \delta_0.
\end{align*}
	This shows that~$\mathcal{K}_\alpha(S,\cdot)$ and~$\mathcal{K}(S,\cdot)$ are close enough in the sense that~\eqref{eq_cond_mu} is satisfied. The proof is now completed using~\eqref{eq_lim_of_reg}.
\end{proof}

The following is proved in a similar manner as Lemma~\ref{lem_prox_kernel_K}, only simpler, so we omit the details.
\begin{lemma}\label{lem_prox_kernel_LL}
	Let~$g:(0,\infty) \to \mathbb{R}$ be continuous with compact support,~$k\in \mathbb{N}$ and~$\varepsilon > 0$. There exists~$\alpha_0 > 0$ such that for any~$\alpha \ge \alpha_0$, we have that with probability larger than~$1-\varepsilon$,~$\mathcal{P}_{\alpha\lambda}$ satisfies the following. For any set~$S \subseteq B_{kr}(o) \cap \mathcal{P}_{\alpha\lambda}$ with~$|S|\le k$, we have
	\[\left|\int_{(0,\infty)} g(t)\;\mathcal{L}_\alpha(S,\mathrm{d}t)- \int_{(0,\infty)}g(t)\;\mathcal{L}(S,\mathrm{d}t)\right| < \varepsilon.\]
\end{lemma}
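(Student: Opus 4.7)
My plan is to mirror the argument used for Lemma~\ref{lem_prox_kernel_K} and exploit the fact that both $\mathcal{L}_\alpha(S,\cdot)$ and $\mathcal{L}(S,\cdot)$ are exponential distributions, with rates
\[
\mu_\alpha(S):=\mathcal{N}_\alpha(S)\lambda_{\rm I}/\alpha\quad\text{and}\quad \mu(S):=|S|\,\upupsilon_d r^d\lambda\lambda_{\rm I},
\]
respectively. Writing $|\mu_\alpha(S)-\mu(S)|=\lambda\lambda_{\rm I}\big|\mathcal{N}_\alpha(S)/(\alpha\lambda)-|S|\upupsilon_d r^d\big|$, the estimate~\eqref{eq_sec_tri_bound} derived inside the proof of Lemma~\ref{lem_prox_kernel_K} already provides, on the regularity event $\mathrm{REG}_\alpha((k+1)r+1,\delta_1)$ and uniformly in $S\subseteq B_{kr}(o)\cap \mathcal{P}_{\alpha\lambda}$ with $|S|\le k$, the bound $|\mu_\alpha(S)-\mu(S)|\le \lambda\lambda_{\rm I}\varepsilon'$ for any prescribed $\varepsilon'>0$, once $\alpha$ is large enough.

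The only new analytic ingredient is the elementary fact that two exponential densities with close rates integrate almost identically against a continuous, compactly supported function. Setting $M:=\|g\|_\infty$ and choosing $T>0$ with $\operatorname{supp}(g)\subseteq(0,T]$, and observing that $\partial_\mu(\mu\me^{-\mu t})=(1-\mu t)\me^{-\mu t}$ with $|(1-x)\me^{-x}|\le 1$ for all $x\ge 0$, the mean value theorem yields the pointwise bound
\[
\big|\mu_\alpha(S)\me^{-\mu_\alpha(S)t}-\mu(S)\me^{-\mu(S)t}\big|\le|\mu_\alpha(S)-\mu(S)|,\qquad t\ge 0.
\]
Integrating against $|g|$, which vanishes outside $(0,T]$, yields the deterministic bound
\[
\left|\int_{(0,\infty)}g(t)\,\mathcal{L}_\alpha(S,\mathrm{d}t)-\int_{(0,\infty)}g(t)\,\mathcal{L}(S,\mathrm{d}t)\right|\le MT|\mu_\alpha(S)-\mu(S)|\le MT\lambda\lambda_{\rm I}\varepsilon',
\]
holding simultaneously for all admissible $S$ on the above regularity event; the degenerate case $|S|=0$ is trivial, since both integrals then vanish.

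To conclude, I would choose $\varepsilon':=\varepsilon/(2MT\lambda\lambda_{\rm I})$, fix the corresponding $\delta_1=\delta_1(\varepsilon',k)$ as supplied by Lemma~\ref{lem_helpful_kernel}, and then appeal to~\eqref{eq_lim_of_reg} to pick $\alpha_0$ so large that the relevant regularity event has probability at least $1-\varepsilon$ for every $\alpha\ge\alpha_0$. There is essentially no obstacle to overcome beyond what was already dealt with in Lemmas~\ref{lem_helpful_kernel} and~\ref{lem_prox_kernel_K}: the geometric and probabilistic content is identical, and the only additional input is the trivial one-parameter Lipschitz dependence of the exponential density on its rate, which is exactly why the authors remark that the proof is \emph{simpler} than that of Lemma~\ref{lem_prox_kernel_K}.
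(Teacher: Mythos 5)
Your proof is correct and matches what the paper intends by ``similar but simpler'': you reduce the temporal kernel comparison to controlling the single scalar $\mathcal{N}_\alpha(S)/(\alpha\lambda)-|S|\upupsilon_dr^d$, which is exactly the bound \eqref{eq_sec_tri_bound} already isolated in the proof of Lemma~\ref{lem_prox_kernel_K} (itself a direct consequence of Lemma~\ref{lem_helpful_kernel}), and the only new ingredient---the uniform Lipschitz bound $\sup_{t\ge 0}\big|\partial_\mu(\mu\me^{-\mu t})\big|\le 1$---is elementary and correct, with the minor caveat properly noted that the degenerate case $|S|=0$ is handled separately and that ``$\alpha$ large enough'' is needed both for the regularity event and for the $k/(\alpha\lambda)<\delta^{d+1}$ requirement inside Lemma~\ref{lem_helpful_kernel}.
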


\section*{Acknowledgements}
We thank the team from Orange S.A.~as well as W.~K\"onig and A.~T\'obi\'as for inspiring discussions. This work was funded by the German Research Foundation under Germany's Excellence Strategy MATH+: The Berlin Mathematics Research Center, EXC-2046/1 project ID: 390685689, Orange Labs S.A., and the German Leibniz Association via the Leibniz Competition 2020.

This research was also supported by grants \#2017/10555-0, \#2019/19056-2, and \#2020/12868-9, S\~ao Paulo Research Foundation (FAPESP). The article was written when L.R.~de Lima was visiting the Department of Mathematics of the Bernoulli Institute at the University of Groningen. He is thankful for their hospitality. 

\bibliographystyle{abbrvnat}
\bibliography{references}

\end{document}